\newcommand\defproof[1]{
\def\proof{\par{\it Proof of #1}. \ignorespaces} %
\def\endproof{\vbox{\hrule height0.6pt\hbox{%
   \vrule height1.3ex width0.6pt\hskip0.8ex
   \vrule width0.6pt}\hrule height0.6pt
  }}
}
\newcommand\undefproof{\def\proof{\par{\it Proof}. \ignorespaces}}
\newcommand{\norm}[1]{\lVert #1 \rVert}
\newcommand{\Norm}[1]{\left\lVert #1 \right\rVert}
\newcommand{\onenorm}[1]{\norm{#1}_1}
\newcommand{\oneNorm}[1]{\Norm{#1}_1}
\newcommand{\twonorm}[1]{\norm{#1}_2}
\newcommand{\nnnorm}[1]{{\left\vert\kern-0.25ex\left\vert\kern-0.25ex\left\vert #1 \right\vert\kern-0.25ex\right\vert\kern-0.25ex\right\vert}}
\newcommand{\abs}[1]{\lvert #1 \rvert}
\newcommand{\Abs}[1]{\left\lvert #1 \right\rvert}
\DeclareMathOperator{\spn}{span}
\newcommand{\titlestring}{%
STABILITY ANALYSIS OF POSITIVE SEMI-MARKOVIAN JUMP LINEAR SYSTEMS WITH STATE RESETS}
\newcommand{\mykeywords}{
Semi-Markovian jump linear systems, 
mean stability, 
positive systems, 
Markovian renewal processes}
\title{\titlestring}
\author{Masaki~Ogura and Clyde~F.~Martin%
\thanks{M.~Ogura and C.~F.~Martin are with the Department
of Mathematics and Statistics, Texas Tech University, Lubbock, TX, 79409 USA
(e-mail: {\texttt{{masaki.ogura@ttu.edu}}}, {\texttt{{clyde.f.martin@ttu.edu}}}).}}
\newcommand\hyperrefopt{bookmarks=true,bookmarksnumbered=true,
pdfpagemode={UseOutlines},plainpages=false,pdfpagelabels=true,
colorlinks=true,linkcolor={black},citecolor={black},urlcolor={black},
pdftitle={\titlestring}, pdfsubject={}, pdfauthor={Masaki Ogura}, pdfkeywords={\mykeywords}}
\newtheorem{assm}[theorem]{Assumption} 
\newtheorem{remark}[theorem]{Remark}
\newtheorem{problem}[theorem]{Problem}
\newtheorem{example}[theorem]{Example}
\begin{document}

\maketitle

\begin{abstract}
This paper studies the mean stability of positive semi-Markovian jump
linear systems. We show that their mean stability is characterized by
the spectral radius of a matrix that is easy to compute. In deriving the
condition we use a certain discretization of a semi-Markovian jump
linear system that preserves stability. Also we show a characterization
for the exponential mean stability of continuous{-time positive}
Markovian jump linear systems. {Numerical examples are given to
illustrate the results.}
\end{abstract}

\begin{keywords} 
\mykeywords
\end{keywords}

\begin{AMS}
60K15, 
93E15, 
15B48, 
93C05 
\end{AMS}

\pagestyle{myheadings}
\thispagestyle{plain}
\markboth{M.~Ogura and C.~F.~Martin}{POSITIVE SEMI-MARKOVIAN JUMP LINEAR SYSTEMS}

\section{Introduction}

The stability analysis of switched systems, a class of dynamical systems
whose mathematical structure experiences abrupt changes, is one of the
most fundamental problems in mathematical systems theory
{\cite{Lin2009,Shorten2007,Dayawansa1999}}. In particular, the stability
of positive switched systems, whose state variables are constrained to
be in positive orthants, has received considerable attentions over the
past
decade~\cite{Blan2012,Knorn2009,Fornasini2010a,Shen2012,Ogura2012b}.
{The study of positive switched systems is motivated by their possible
application in pharmacokinetics. In the modern treatment of human
immunodeficiency virus (HIV) infection, multiple drug regimens are
employed to prevent the emergence of drug-resistant
virus~\cite{Wainberg2008}. The authors in \cite{Hernandez-Vargas2011}
solve the minimization problem of such virus mutation by its reduction
to the optimal control problem of a positive switched system under
simplifying assumptions. The reduction was made possible by the
switching nature of HIV treatments and the positivity constraint
naturally placed on the population of virus.} The importance of this
class of switched systems {also} stems from the fact that such
positivity constraints naturally arise in broad areas including
communication systems~\cite{Shorten2006}, formation
flying~\cite{Jadbabaie2003a} and multi agent
systems~\cite{Olfati-Saber2004}.

The stability of positive switched {linear} systems has been mainly
studied by co-positive Lyapunov
functions~\cite{Blan2012,Wu2013,Fornasini2010a,Knorn2009,Gurvits2007}. A
co-positive Lyapunov function is a non-negative linear form of state
variables, which makes a contrast with general cases where the
non-negativity of Lyapunov functions forces us to use quadratic
functionals of state variables. Its linearity often reduces the
stability analysis of positive switched linear systems to linear
problems. For example, a positive switched linear system is stable for
all switching signals if a family of square matrices associated with the
given system consists of matrices whose eigenvalues have only negative
real parts \cite{Knorn2009,Fornasini2010a,Wu2013}.

However, once a switched system is modeled as a \emph{stochastic}
switched system~\cite{Kozin1969,Pola2003}, the above mentioned Lyapunov
function approach fails to take the probability distribution into
account appropriately because it treats any sample path in a rather
equal manner. One of the natural notions of stability in this case is
mean stability \cite{Kozin1969}, which requires that the norm of state
variables converges to $0$ in expectation. One of the earliest results
along this line is by Feng et al.~\cite{Feng1992}, where they give a
necessary and sufficient condition for the exponential mean square
stability of continuous-time Markovian jump linear systems. This result
has been extended to switched {linear} systems with various stochastic
structures including switching Markovian jump linear
systems~\cite{Bolzern2010a}, Markovian jump linear systems with
disturbances \cite{Fragoso2005}, and stochastic hybrid systems with
renewal transitions \cite{Antunes2013}. Stochastic Lyapunov function
approaches for the stability analysis of Markovian jump nonlinear
systems with disturbances can be found
in~\cite{Mao1999,Khasminskii2007}.

The aim of this paper is to give criteria for the mean stability of
positive stochastic switched systems. We assume that the switched
systems are semi-Markovian jump linear systems \cite{Huang2012} (also
called stochastic hybrid systems with renewal transitions
\cite{Antunes2013}), whose switching signal is \del{driven}{\delws}a Markovian renewal
process~\cite{Janssen2006}. We show that their exponential mean
stability is characterized by the spectral radius of a matrix that is
easy to compute. We also allow their state to be reset \cite{Nesic2008}
by random linear mappings at the switching instances.

One of the difficulties in analyzing semi-Markovian jump linear systems
is that its transition rate of discrete-modes is not time-invariant
\cite{Huang2012}. Instead of employing Volterra integral equations used
in \cite{Antunes2013}, we avoid this difficulty by introducing a
discretization of semi-Markovian jump linear systems that admits a
certain time-invariant expression
(Proposition~\ref{proposition:diffeq}). That discretization turns out to
preserve stability properties and hence the system matrix of the
discretization exactly determines the stability of the original system.

This paper is organized as follows. After preparing necessary
mathematical notations, in Section~\ref{section:Hswrt} we give the
definition of continuous-time {positive} semi-Markovian jump linear
systems and state the main result. Section~\ref{section:stabDisc} gives
the stability analysis of discrete-time {positive} semi-Markovian jump
linear systems. Based on the analysis Section~\ref{section:Proof} gives
the proof of the main result. The exponential mean stability of
{positive} Markovian jump linear systems is studied in
Section~\ref{section:mjls}.

\subsection{Mathematical Preliminaries}

Let $(\Omega, \mathcal M, P)$ be a probability space. For a\add{n}
\add{integrable} random variable $X$ on $\Omega$ its expected value is
denoted by $E[X]$. \add{Without being explicitly stated, the random
variables that appear in this paper will be assumed to be integrable.}
If $\mathcal M_1 \subset \mathcal M$ is a $\sigma$-algebra then $E[X\mid
\mathcal M_1]$ denotes the conditional expectation of $X$ given
$\mathcal M_1$. It is well known (see, e.g., \cite{Borkar1995}) that, if
$\mathcal M_2 \subset \mathcal M_1$ is another $\sigma$-algebra then
\begin{equation}\label{eq:filtering}
\begin{aligned}
E[E[X\mid\mathcal M_2] \mid \mathcal M_1]
&=
E[E[X\mid\mathcal M_1] \mid \mathcal M_2]
\\
&=
E[X\mid\mathcal M_2]. 
\end{aligned}
\end{equation}
The $\sigma$-algebra generated by random variables $X_1$, $\dotsc$,
$X_\ell$ is defined \cite{Borkar1995} as the smallest $\sigma$-algebra
on which $X_1$, $\dotsc$, $X_\ell$ are measurable and is denoted by
$\mathcal M(X_1, \dotsc, X_\ell)$.  For a function $f$ on $\mathbb{R}$
its limit at $t$ from the left, if it exists, is denoted by $f(t^-)$.

\begin{lemma}\label{lem:filtering}
Let $\mathcal M_1, \mathcal M_2, \mathcal M_3 \subset \mathcal M$ be
$\sigma$-algebras on $\Omega$ and $X$ be a random variable on~$\Omega$.
If $E[X\mid \mathcal M_1] = E[X\mid \mathcal M_3]$ and $\mathcal M_1
\subset \mathcal M_2 \subset \mathcal M_3$ then $E[X\mid \mathcal M_1] =
E[X\mid \mathcal M_2]$.
\end{lemma}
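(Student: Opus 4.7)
The plan is to chain the tower property \eqref{eq:filtering} through the nested $\sigma$-algebras, using the hypothesis to collapse $\mathcal M_3$ down to $\mathcal M_1$.

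First I would condition the random variable $E[X \mid \mathcal M_3]$ on $\mathcal M_2$. Since $\mathcal M_2 \subset \mathcal M_3$, the standard tower property gives
\begin{equation*}
E[E[X\mid\mathcal M_3]\mid\mathcal M_2] = E[X\mid\mathcal M_2].
\end{equation*}
Next I would substitute the hypothesis $E[X\mid\mathcal M_3] = E[X\mid\mathcal M_1]$ into the left-hand side, yielding
\begin{equation*}
E[X\mid\mathcal M_2] = E[E[X\mid\mathcal M_1]\mid\mathcal M_2].
\end{equation*}
Finally, because $\mathcal M_1 \subset \mathcal M_2$, the random variable $E[X\mid\mathcal M_1]$ is already $\mathcal M_2$-measurable, so conditioning it on $\mathcal M_2$ leaves it unchanged; equivalently, this is the second equality in \eqref{eq:filtering} applied with the roles of $\mathcal M_1$ and $\mathcal M_2$ swapped. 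The result follows.

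There is really no obstacle here: the statement is an almost immediate consequence of \eqref{eq:filtering} together with the fact that conditioning a $\mathcal G$-measurable random variable on a larger $\sigma$-algebra containing $\mathcal G$ acts as the identity. The only care needed is to pick the right ordering of conditionings so that the nested inclusion $\mathcal M_1 \subset \mathcal M_2 \subset \mathcal M_3$ is exploited in both directions, once to bring $\mathcal M_3$ down to $\mathcal M_2$ and once to recognize $E[X\mid\mathcal M_1]$ as $\mathcal M_2$-measurable.
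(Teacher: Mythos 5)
Your proof is correct and is essentially identical to the paper's: both arguments consist of the same three-step chain (the tower property collapses $E[X\mid\mathcal M_3]$ conditioned on $\mathcal M_2$ to $E[X\mid\mathcal M_2]$, the hypothesis replaces $E[X\mid\mathcal M_3]$ by $E[X\mid\mathcal M_1]$, and the $\mathcal M_2$-measurability of $E[X\mid\mathcal M_1]$ finishes), merely written in the reverse order. No gaps.
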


\begin{proof}
By \eqref{eq:filtering} and the assumption,
\begin{equation*}
\begin{aligned}
E[ \mathchange{f}{X} \mid \mathcal M_1]
&=
E\left[E[X \mid \mathcal M_1]\mid \mathcal M_2\right]
\\
&=
E\left[E[X \mid \mathcal M_3]\mid \mathcal M_2\right]
\\
&=
E\left[X \mid \mathcal M_2\right]. 
\end{aligned}
\end{equation*}
This completes the proof.
\end{proof}

A {real} matrix $A$ is said to be {\it nonnegative} if it has only
nonnegative entries and we write $A\geq 0$. Let $A$ be square. $A$ is
said to be \emph{Metzler} if its off-diagonal entries are nonnegative.
Since any Metzler matrix $A$ can be written as $\tilde A-\alpha I$ where
$\tilde A\geq 0$, $\alpha\in\mathbb{R}$, and $I$ is the identity matrix,
the exponential matrix $e^{At}$ ($t\geq 0$) is nonnegative.  The
Kronecker product \cite{Brewer1978} of $A$ and another matrix $B$ is
denoted by $A\otimes B$.  The spectral radius of $A$ is denoted by
$\rho(A)$. We say that $A$ is Schur stable if $\rho(A) < 1$. {Also the
spectral abscissa of $A$, denoted by $\eta(A)$, is defined as the
largest real part of the eigenvalues of $A$.} {We} say that $A$ is
Hurwitz stable if {$\eta(A) < 0$}.  The next lemma gives basic facts
about nonnegative matrices (see, e.g., \cite{Tam2006}).

\begin{lemma}\label{lemma:PFeig}
Let $A \geq 0$ {be square}. Then $\rho(A)$ is an eigenvalue of $A$ and
there exists a nonnegative eigenvector to the eigenvalue $\rho(A)$.
Moreover if $A\geq B\geq 0$ then $\rho(A)\geq \rho(B)$.
\end{lemma}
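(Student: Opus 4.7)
The plan is to derive both assertions from the classical Perron--Frobenius theory via a perturbation argument for the eigenvector claim and Gelfand's spectral radius formula for the monotonicity claim.

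For the first part, I would begin by reducing to the case of strictly positive matrices, for which Perron's theorem is standard. Perturb $A$ to $A_\epsilon = A + \epsilon J$, where $J$ is the all-ones matrix and $\epsilon > 0$, so that $A_\epsilon$ is entry-wise positive. Perron's theorem then provides a strictly positive eigenvector $v_\epsilon$ satisfying $A_\epsilon v_\epsilon = \rho(A_\epsilon) v_\epsilon$, which I would normalize by $\onenorm{v_\epsilon} = 1$. Next I would let $\epsilon \to 0^+$: since the spectrum of a matrix depends continuously on its entries, $\rho(A_\epsilon) \to \rho(A)$, and since the set $\{v \geq 0 : \onenorm{v} = 1\}$ is compact, a subsequence of $v_\epsilon$ converges to some $v \geq 0$ with $\onenorm{v} = 1$. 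Passing to the limit in the eigenvalue equation yields $Av = \rho(A) v$, which proves the first claim.

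For the monotonicity statement, I would invoke Gelfand's formula $\rho(M) = \lim_{k\to\infty} \norm{M^k}^{1/k}$ using the induced $\infty$-norm, which equals the maximum absolute row sum and is monotone on the nonnegative matrices. Entry-wise, $A \geq B \geq 0$ propagates under multiplication: a straightforward induction shows $A^k \geq B^k \geq 0$ for every $k$, so that $\norm{A^k}_\infty \geq \norm{B^k}_\infty$. Taking $k$th roots and passing to the limit gives $\rho(A) \geq \rho(B)$.

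The main obstacle lies in the first part, specifically in verifying that the limit $v$ is nonzero and that the eigenvalue it realizes is indeed $\rho(A)$ rather than merely some eigenvalue of $A$. The $\ell^1$-normalization handles nontriviality, while continuity of the spectral radius together with the fact that $\rho(A_\epsilon)$ is an eigenvalue for every $\epsilon$ identifies the limiting eigenvalue as $\rho(A)$. The monotonicity portion is largely mechanical once the correct monotone norm is chosen. Since the result is invoked only as a standard tool and the reference \cite{Tam2006} is already cited, the lemma could equally well be left without proof; the sketch above records what that proof would look like.
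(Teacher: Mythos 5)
Your proof is correct. Note, however, that the paper does not actually prove this lemma: it is stated as a known fact with a pointer to the Tam--Schneider reference, so there is no in-paper argument to compare against. What you have written is the standard derivation --- the perturbation $A_\epsilon = A + \epsilon J$ combined with Perron's theorem, $\ell^1$-normalization, and compactness correctly produces a nonzero nonnegative eigenvector for $\rho(A)$ (the continuity of the spectral radius identifies the limiting eigenvalue), and the Gelfand-formula argument with the monotone induced $\infty$-norm correctly yields $\rho(A)\geq\rho(B)$ from the entrywise inequality $A^k \geq B^k \geq 0$. Your closing observation is also the right call: in the context of this paper the lemma is a cited tool, and leaving it unproved, as the authors do, is entirely appropriate.
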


The next corollary readily follows from the lemma. 

\begin{corollary}\label{corollary:compSpecRads}
Let $A$ and $B$ be nonnegative square matrices with the same dimensions.
If $A_{ij} > B_{ij}$ whenever $B_{ij} > 0$ then $\rho(A) > \rho(B)$.
\end{corollary}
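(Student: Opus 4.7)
The plan is to convert the hypothesis into a multiplicative comparison between $A$ and $B$ and then invoke the monotonicity part of Lemma~\ref{lemma:PFeig}, together with the scaling identity $\rho(\alpha M) = \alpha\rho(M)$ for $\alpha\geq 0$ and $M\geq 0$. The strategy rests on observing that although the hypothesis reads as entrywise strict inequality only on the support of $B$, the finiteness of the matrix lets us choose a single uniform gap.

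First I would define
\[
\delta \;=\; \min\bigl\{\,A_{ij}/B_{ij}\,:\,B_{ij} > 0\,\bigr\},
\]
which is a minimum over a finite, nonempty index set (assuming the nontrivial case $B \neq 0$), and by hypothesis each quotient satisfies $A_{ij}/B_{ij} > 1$, so $\delta > 1$. Next I would verify $A \geq \delta B$ entrywise: for $(i,j)$ with $B_{ij} > 0$ this is immediate from the definition of $\delta$, and for $(i,j)$ with $B_{ij}=0$ the desired bound $A_{ij}\geq 0 = \delta B_{ij}$ follows from $A\geq 0$. Applying Lemma~\ref{lemma:PFeig} to $A \geq \delta B \geq 0$ yields
\[
\rho(A) \;\geq\; \rho(\delta B) \;=\; \delta\,\rho(B),
\]
and since $\delta > 1$, the conclusion $\rho(A) > \rho(B)$ follows whenever $\rho(B) > 0$.

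The main obstacle is the degenerate case $\rho(B) = 0$, in which multiplying by $\delta>1$ produces no gap; here one must use the hypothesis more delicately (for instance, by a direct Collatz--Wielandt argument on a nonnegative eigenvector of $A$ to rule out $\rho(A)=0$) to recover the strict inequality. Once that case is absorbed, the remainder of the proof is essentially the two-line scaling computation sketched above.
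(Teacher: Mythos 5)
Your argument coincides with the paper's proof: define $r=\min_{i,j:\,B_{ij}\neq 0} A_{ij}/B_{ij}>1$, observe $A\geq rB\geq 0$, and apply the monotonicity part of Lemma~\ref{lemma:PFeig} to get $\rho(A)\geq r\rho(B)>\rho(B)$. The one point where you go beyond the paper is in flagging the degenerate case $\rho(B)=0$; your instinct that the scaling argument produces no gap there is correct, but no argument (Collatz--Wielandt or otherwise) can close it, because the statement itself fails for nilpotent $B$: taking $B=e_1e_2^\top$ and $A=2B$ satisfies the hypothesis yet gives $\rho(A)=\rho(B)=0$. The paper's own proof silently commits the same slip, so the corollary should be read with the implicit hypothesis $\rho(B)>0$; this is harmless where it is invoked (Proposition~\ref{proposition:incr}), since the conclusion ultimately needed there, $\rho(\mathcal A_m(\Sigma))<1$, is trivial when $\rho(\mathcal A_m(\Sigma))=0$.
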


\begin{proof}
Define $r = \min_{i, j: B_{ij}\neq 0} \frac{A_{ij}}{B_{ij}} > 1$.  Then
$A\geq rB{\geq 0}$ so that, by Lemma~\ref{lemma:PFeig}, $\rho(A) \geq
r\rho(B) > \rho(B)$.
\end{proof}

The $m$-norm of $x\in \mathbb{R}^n$ is defined by $\norm{x}_m =
(\sum_{i=1}^{n} \abs{x_i}^m)^{1/m}$. {The symbol} $1_{\ell}$ denotes the
column vector of length $\ell$ whose entries are all $1$. {The} 1-norm
is linear on the positive orthant $\mathbb{R}^n_+$ because if $x\geq 0$
then  $\onenorm{x} = 1_n^\top x$.  By $e_i$ we denote the $i$-th
standard unit vector in $\mathbb{R}^N$ defined by
\begin{equation*}
[e_i]_j=
\begin{cases}
1 & j=i, \\
0&\text{otherwise. }
\end{cases}
\end{equation*}
It is easy to see that, for all $i${, $m\geq 1$, }and $x\in \mathbb{R}^n$, 
\begin{equation}\label{eq:normdef2}
\norm{x}_m
=
\norm{e_i\otimes x}_m, 
\end{equation}
where the $m$-norm on the right hand side is defined on
$\mathbb{R}^{nN}$.

For $x\in \mathbb{R}^n$ and a positive integer $m$ the vector $x^{[m]}$
is defined \cite{Brockett1973,Barkin1983} as the real vector of length
\begin{equation*}
n_m = \binom{n+m-1}{{m}}
\end{equation*}
whose elements are {all the} lexicographically ordered monomials of
degree $m$ in $x_1$, $\dotsc$, $x_n$. Their nonzero coefficients are
chosen  in such a way that $\twonorm{x}^m = \twonorm{x^{[m]}}$. From
\eqref{eq:normdef2} it follows that
\begin{equation}\label{eq:norm:pi}
\twonorm{e_i \otimes x^{[m]}} = \twonorm{x}^m
\end{equation}
for any standard unit vector $e_i$.   For $A \in\mathbb{R}^{n\times n}$
we define the {$n_m\times n_m$} matrices $A^{[m]}$ and $A_{[m]}$ as the
unique matrices \cite{Brockett1973,Barkin1983} satisfying
\begin{equation}\label{eq:def:A^[m]}
(Ax)^{[m]} = A^{[m]} x^{[m]}
\end{equation}
{for every $x\in\mathbb{R}^n$} and
\begin{equation}\label{eq:def:A_[m]}
\left[\frac{dx}{dt} = Ax\right] 
\Rightarrow 
\left[\frac{dx^{[m]}}{dt} = A_{[m]} x^{[m]}\right]
\end{equation}
for every $\mathbb{R}^n$-valued differentiable function~$x$ \add{on
$\mathbb{R}$}. We need the next two lemmas about the vector $x^{[m]}$.

\begin{lemma}
Let $x$ be an $\mathbb{R}^n$-valued random variable. If $x\geq 0$ with
probability 1 then
\begin{equation}\label{eq:mnorm}
E[\norm{x}_m^m]\leq n\onenorm{E[x^{[m]}]}.
\end{equation}
\end{lemma}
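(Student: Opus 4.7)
The plan is to establish the inequality pointwise on $\Omega$ and then pass to expectations. For a fixed realization with $x \geq 0$, I would first use the elementary coordinate bound $x_i \leq \twonorm{x}$ to obtain $x_i^m \leq \twonorm{x}^m$ for each $i$, and then sum over $i$:
\begin{equation*}
\norm{x}_m^m = \sum_{i=1}^{n} x_i^m \leq n\twonorm{x}^m.
\end{equation*}
The defining normalization of $x^{[m]}$, namely $\twonorm{x^{[m]}} = \twonorm{x}^m$, then converts this into $\norm{x}_m^m \leq n\twonorm{x^{[m]}}$, and the standard inequality $\twonorm{y} \leq \oneNorm{y}$ (applied to $y = x^{[m]}$) yields the pointwise estimate $\norm{x}_m^m \leq n\oneNorm{x^{[m]}}$.

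To pass from the pointwise bound to \eqref{eq:mnorm}, I would exploit the hypothesis $x \geq 0$ almost surely, which forces $x^{[m]} \geq 0$ almost surely because every entry of $x^{[m]}$ is, by construction, a positive multiple of a monomial in the nonnegative coordinates of $x$. Consequently $\oneNorm{x^{[m]}} = 1_{n_m}^\top x^{[m]}$ almost surely, and linearity of expectation gives $E[\oneNorm{x^{[m]}}] = 1_{n_m}^\top E[x^{[m]}] = \oneNorm{E[x^{[m]}]}$, where the last equality uses $E[x^{[m]}] \geq 0$. Taking expectations in the pointwise inequality then closes the argument.

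There is no substantive obstacle; the proof is essentially a chain of elementary norm comparisons. The only point worth flagging is that a finer bookkeeping of the multinomial coefficients appearing in $x^{[m]}$ would in fact yield the sharper bound $E[\norm{x}_m^m] \leq \oneNorm{E[x^{[m]}]}$, removing the factor $n$; since the lemma is stated with this slack, the short chain $\norm{x}_m^m \leq n\twonorm{x}^m = n\twonorm{x^{[m]}} \leq n\oneNorm{x^{[m]}}$ is both sufficient and notationally lighter, so I would take this route.
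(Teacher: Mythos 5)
Your proof is correct, but it takes a different route from the paper's. The paper works directly at the level of expectations: it writes $E[\norm{x}_m^m]=\sum_{i=1}^n E[x_i^m]$ and observes that each $x_i^m$ is itself an entry of $x^{[m]}$ (its coefficient in the normalization $\twonorm{x^{[m]}}=\twonorm{x}^m$ is exactly $1$), so $E[x_i^m]=\Abs{E[x_i^m]}\leq\onenorm{E[x^{[m]}]}$, and summing over $i$ produces the factor $n$. You instead prove the pointwise bound $\norm{x}_m^m\leq n\twonorm{x}^m=n\twonorm{x^{[m]}}\leq n\onenorm{x^{[m]}}$ and then take expectations, using $x^{[m]}\geq 0$ a.s.\ and the linearity of $\onenorm{\cdot}$ on the positive orthant to move the expectation inside the norm. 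Both arguments are sound and equally elementary; the paper's version never needs the norm identity $\twonorm{x^{[m]}}=\twonorm{x}^m$ or the comparison $\twonorm{\cdot}\leq\onenorm{\cdot}$, while yours never needs to identify which entries of $x^{[m]}$ are the pure powers $x_i^m$. Your closing remark is also right: running the paper's argument one step further (the $n$ entries $E[x_1^m],\dotsc,E[x_n^m]$ are distinct nonnegative entries of $E[x^{[m]}]$, so their sum is already at most $\onenorm{E[x^{[m]}]}$) removes the factor $n$ entirely; the lemma is stated with that slack because the constant is immaterial for the stability analysis it feeds into.
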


\begin{proof}
By the assumption we have $E[\norm{x}_m^m] = E[\sum^{n}_{i=1}
\abs{x_i}^m] = \sum^{n}_{i=1} E[x_i^m]$. Since $x^{[m]}$ has the entry
$x_i^m$ it holds that $E[x_i^m] = \Abs{E[x_i^m]}\leq
\onenorm{E[x^{[m]}]}$. Thus \eqref{eq:mnorm} is true.
\end{proof}

\begin{lemma}\label{lemma:cone}
The set~$\{e_i \otimes x^{[m]} :  1\leq i\leq N,\ x\in\mathbb{R}^n\}
\subset  \mathbb{R}^{n_mN}$ spans $\mathbb{R}^{n_mN}$ over~$\mathbb{R}$.
\end{lemma}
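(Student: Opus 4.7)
The plan is to reduce the claim to showing that $\{x^{[m]} : x\in\mathbb{R}^n\} \subset \mathbb{R}^{n_m}$ spans $\mathbb{R}^{n_m}$ over $\mathbb{R}$, and then to establish this by a polynomial identity argument in the dual.

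For the reduction, I would note that every vector $v\in\mathbb{R}^{n_m N}$ decomposes uniquely as $v = \sum_{i=1}^{N} e_i \otimes v_i$ with $v_i\in\mathbb{R}^{n_m}$, since $e_1,\dotsc,e_N$ is a basis of $\mathbb{R}^N$. Because $u\mapsto e_i \otimes u$ is linear, once each $v_i$ is written as a linear combination of vectors of the form $x^{[m]}$ the vector $v$ is expressed as a linear combination of vectors of the form $e_i \otimes x^{[m]}$, which is precisely the desired conclusion.

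For the main step, I would argue by duality. Index the entries of $x^{[m]}$ by multi-indices $\alpha \in \mathbb{Z}_{\geq 0}^n$ with $|\alpha|=m$, ordered lexicographically. By the definition of $x^{[m]}$ together with the normalization $\twonorm{x^{[m]}} = \twonorm{x}^m$, the $\alpha$-entry of $x^{[m]}$ equals $c_\alpha x^\alpha$ with $c_\alpha = \sqrt{\binom{m}{\alpha}} > 0$, so every monomial appears with a nonzero coefficient. Now let $\phi = (\phi_\alpha)_\alpha \in (\mathbb{R}^{n_m})^{*}$ annihilate $\{x^{[m]} : x \in \mathbb{R}^n\}$. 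Then
\begin{equation*}
0 = \phi(x^{[m]}) = \sum_{|\alpha|=m} \phi_\alpha c_\alpha\, x^\alpha \qquad \text{for every } x \in \mathbb{R}^n,
\end{equation*}
which is a homogeneous polynomial identity of degree $m$. Since the distinct monomials $\{x^\alpha : |\alpha|=m\}$ are linearly independent as polynomial functions on $\mathbb{R}^n$, this forces $\phi_\alpha c_\alpha = 0$ for every $\alpha$; together with $c_\alpha \neq 0$ this yields $\phi = 0$. Therefore the annihilator of $\{x^{[m]} : x \in \mathbb{R}^n\}$ is trivial, so this set spans $\mathbb{R}^{n_m}$, and the reduction completes the proof.

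I do not expect a serious obstacle here: the only subtle point is confirming that every coefficient $c_\alpha$ is strictly positive, but this is immediate from the prescribed normalization of $x^{[m]}$. An alternative route would be a direct polarization argument producing $n_m$ explicit values of $x$ at which the vectors $x^{[m]}$ are linearly independent, but the dual/polynomial approach above is both shorter and more transparent.
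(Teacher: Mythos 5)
Your proposal is correct and follows essentially the same route as the paper: reduce to showing $\{x^{[m]}: x\in\mathbb{R}^n\}$ spans $\mathbb{R}^{n_m}$, then show any linear functional (equivalently, any vector orthogonal to the set) annihilating it must vanish because the monomials of degree $m$ are linearly independent and each appears in $x^{[m]}$ with a nonzero coefficient. Your explicit identification of the coefficients $c_\alpha$ is a harmless elaboration of the paper's appeal to the normalization of $x^{[m]}$.
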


\begin{proof}
Since each of $e_i$ is a standard unit vector, it is sufficient to show
that the set $S = \{ x^{[m]} : x\in\mathbb{R}^n\}$  spans
$\mathbb{R}^{n_m}$. Assume that $v\in \mathbb{R}^{n_m}$ is orthogonal to
$\spn S$. Then, for every $x\in \mathbb{R}^n$ we have 
\begin{equation}\label{eq:inprodid}
v^\top x^{[m]} = 0. 
\end{equation}
Since $x^{[m]}$ has all the monomials of degree $m$ with nonzero
coefficient,  comparing the coefficient of each monomial in
\eqref{eq:inprodid} shows {$v=0$}, which implies that $\spn S$ is the
entire space $\mathbb{R}^{n_m}$.
\end{proof}

\section{Continuous-time {Positive} Semi-Markovian Jump Linear Systems}
\label{section:Hswrt}

This section introduces continuous-time {positive} semi-Markovian jump
linear systems. Then we define their exponential and stochastic mean
stability. After that we state our main result, which gives the
characterization of the mean stability of continuous-time  positive
semi-Markovian jump linear systems. {A numerical example is given to
illustrate the result.}

Let $A_1$, $\dotsc$, $A_N$ be $n\times n$ real matrices.  Throughout
this paper we fix a probability space $(\Omega, \mathcal M, P)$. Let
$\{\sigma_k\}_{k=0}^\infty$, $\{t_k\}_{k=0}^\infty$, and
$\{J_k\}_{k=0}^\infty$ be stochastic processes on $\Omega$ taking values
in $\{1, \dotsc, N\}$,  $\mathbb{R}_+$, and $\mathbb{R}^{n\times n}$,
respectively. We assume that $\{t_k\}_{k=0}^\infty$ is non-decreasing.
We let
\begin{equation*}
h_k = t_{k+1} - t_k,\ k\geq 0. 
\end{equation*}
Assume that $t_0 = 0$ and $\sigma_0$ is a constant. Let $\Sigma$ be the
stochastic switched system defined by
\begin{equation} \label{eq:SMJLS}
\Sigma: 
\begin{cases}
\dfrac{dx}{dt} = A_{\sigma_k} x(t),\ t_k\leq t<t_{k+1}
\\
x(t_{k+1}) = J_k x(t_{k+1}^-),\ k\geq 0
\end{cases}
\end{equation}
where $x(0) = x_0 \in \mathbb{R}^n$ is a constant vector. 

\begin{definition}
We say that $\Sigma$ is a \emph{continuous-time semi-Markovian jump
linear system} if the following two conditions hold for every $i, j\in
\{1, \dotsc, N\}$, $t\geq 0$, and {every} Borel subset $B$ of
$\mathbb{R}^{n\times n}$.
\begin{enumerate}[label=C{\arabic*}.,ref=C{\arabic*}]
\item \label{item:c:assm:renew} (Markovian property) It holds that 
\begin{equation*}
\begin{aligned}
&P(\sigma_{k+1}=j, h_k \leq t, J_k\in B
\mid \sigma_k, \dotsc, \sigma_{0},t_k, \dotsc, t_0, J_{k-1}, \dotsc, J_0)
\\
=&P(\sigma_{k+1}=j, h_k \leq t, J_k\in B \mid \sigma_k). 
\end{aligned}
\end{equation*}

\item \label{item:c:assm:homo} (Time homogeneity) The probability 
\begin{equation}\label{eq:trapro}
P(\sigma_{k+1}=j, h_k \leq t, J_k\in B \mid \sigma_k = i)
\end{equation}
is independent of $k$. 
\end{enumerate}
Furthermore we say that $\Sigma$ is \emph{positive} if
\begin{enumerate}[label=C{\arabic*}.,ref=C{\arabic*}]\setcounter{enumi}{2}
\item \label{item:c:assm:>=0} (Positivity) The matrices $A_1$, $\dotsc$,
$A_N$ are Metzler and, for each $k\geq 0$, $J_k$ is nonnegative with
probability 1.
\end{enumerate}
\end{definition}

The conditions~\ref{item:c:assm:renew} and~\ref{item:c:assm:homo} {in
particular} show that {the process}~$\{(\sigma_k, t_k)\}_{k=0}^\infty$
is a time-homogeneous Markovian renewal process and therefore
$\{\sigma_k\}_{k=0}^\infty$ is a time-homogeneous Markov
chain~\cite{Janssen2006}. We let  $[p_{ij}]_{ij} \in \mathbb{R}^{N\times
N}$ be the transition matrix of the Markov
chain~$\{\sigma_k\}_{k=0}^\infty$. The condition~\ref{item:c:assm:>=0}
implies that $x(t)\geq 0$ for every $t\geq 0$ with probability 1
provided $x_0 \geq 0$.  Without being explicitly stated, throughout
this paper $\Sigma$ denotes a positive continuous-time semi-Markovian
jump linear system. The aim of this paper is to study the stability of
$\Sigma$ defined as follows.

\begin{definition}\label{definition:c:stbl}
Let $m$ be a positive integer. 
\begin{itemize}
\item $\Sigma$ is said to be \emph{exponentially $m$-th mean stable} if
there exist $C>0$ and $\beta > 0$ such that, for every $x_0$ and
$\sigma_0$,
\begin{equation}\label{eq:def:expsta}
E[\norm{x(t)}^m] \leq Ce^{-\beta t}\norm{x_0}^m. 
\end{equation}

\item $\Sigma$ is said to be \emph{stochastically $m$-th mean stable}
if, for any $x_0$ and $\sigma_0$,
\begin{equation}\label{eq:def:stosta}
\int_0^\infty E[\norm{x(t)}^m]\,dt < \infty. 
\end{equation}
\end{itemize}
\end{definition}

\begin{remark}\label{remark:on:norm}
The stability notions in Definition~\ref{definition:c:stbl} are
independent of the norms used in \eqref{eq:def:expsta} and
\eqref{eq:def:stosta} by the  equivalence of the norms on a
finite-dimensional normed vector space.  Actually we can even use two
different norms in \eqref{eq:def:expsta}.
\end{remark}

We now state the next assumption.  
\begin{assm}\label{assumption:cont}\ 
\begin{enumerate} 
\item \label{item:assm:h_k>0} 
For every $k\geq 0$, 
\begin{equation} \label{eq:hk>0}
h_k > 0. 
\end{equation}

\item \label{item:assm:h_k<T} 
There exists $T > 0$ such that, for every $k\geq 0$, 
\begin{equation}\label{eq:hk<T}
h_k \leq T. 
\end{equation}

\item \label{item:assm:normJ<R} 
There exists $R>0$ such that, for every $k\geq 0$, 
\begin{equation*}
\norm{J_k} \leq R. 
\end{equation*}
\end{enumerate}
\end{assm}

In this assumption, only the second condition, which is also used in
\cite{Antunes2013}, is essential.  The first condition
\ref{item:assm:h_k>0} in Assumption~\ref{assumption:cont} is not
restrictive because {most of} semi-Markovian jump linear system{s} can
be rewritten {as a semi-Markovian jump linear system} that satisfies the
condition~\ref{item:assm:h_k>0}. Also, though in \cite{Antunes2013} they
use constant jump matrices, this paper allows them to be uniformly
bounded random variables.

The next theorem is the main result of this paper. 
\begin{theorem}\label{theorem:main}
$\Sigma$ is exponentially $m$-th mean stable if and only if the block
matrix~$\mathcal A_m (\Sigma) \in \mathbb{R}^{(n_mN)\times (n_mN)}$
whose $(i,j)$-block is defined by
\begin{equation} \label{eq:def:calA}
[\mathcal A_m (\Sigma)]_{ij} 
= 
p_{ji}
E\left[(J_k e^{A_{\sigma_k} h_k})^{[m]} \mid \sigma_k = j, \sigma_{k+1}=i\right]
\in
\mathbb{R}^{n_m\times n_m}
\end{equation}
is Schur stable. 
\end{theorem}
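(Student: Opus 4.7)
The plan is to reduce the continuous-time stability question to a time-invariant linear recursion on $\mathbb{R}^{n_m N}$ by sampling the trajectory at the jump instants, then to lift the resulting discrete-time analysis back to continuous time via Assumption~\ref{assumption:cont}. Setting $y_k = x(t_k)$, the dynamics \eqref{eq:SMJLS} give $y_{k+1} = J_k e^{A_{\sigma_k} h_k} y_k$, and applying the $[m]$-operator defined in \eqref{eq:def:A^[m]} yields
\begin{equation*}
y_{k+1}^{[m]} = \bigl(J_k e^{A_{\sigma_k} h_k}\bigr)^{[m]} y_k^{[m]}.
\end{equation*}
Define the stacked vector $Q_k\in\mathbb{R}^{n_m N}$ whose $i$-th block is $q_k^{(i)}=E[y_k^{[m]} \mathbf{1}_{\{\sigma_k=i\}}]$. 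Conditioning on $\{\sigma_k=j\}$ and using the Markov property~\ref{item:c:assm:renew} and time homogeneity~\ref{item:c:assm:homo}, the triple $(h_k,J_k,\sigma_{k+1})$ is independent of $y_k$ given $\sigma_k$, so the conditional expectation factors and directly matches \eqref{eq:def:calA}, giving the clean recursion $Q_{k+1} = \mathcal{A}_m(\Sigma)\,Q_k$. This is the ``time-invariant expression'' promised by the discretization.

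For the sufficient direction, assume $\mathcal{A}_m(\Sigma)$ is Schur stable, so $\lVert Q_k\rVert_1$ decays geometrically. By positivity~\ref{item:c:assm:>=0} and linearity of the dynamics in $x_0$, it suffices to treat $x_0\geq 0$, whence $y_k\geq 0$ almost surely. Then Lemma with inequality \eqref{eq:mnorm} gives $E[\lVert y_k\rVert_m^m]\leq n\lVert Q_k\rVert_1$, which decays exponentially in $k$. To interpolate, for $t\in[t_k,t_{k+1})$ we have $x(t)=e^{A_{\sigma_k}(t-t_k)} y_k$; by Assumption~\ref{assumption:cont}\ref{item:assm:h_k<T} we have $t-t_k\leq T$, and $\{e^{A_i s}:s\in[0,T],\,1\leq i\leq N\}$ is uniformly bounded, so $E[\lVert x(t)\rVert_m^m]\leq M\,E[\lVert y_k\rVert_m^m]$ for a constant $M$. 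Since $h_k\leq T$ also bounds $k$ below by a linear function of $t$, exponential decay in $k$ translates to exponential decay in $t$, establishing~\eqref{eq:def:expsta} by the equivalence of norms noted in Remark~\ref{remark:on:norm}.

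For the necessary direction, suppose $\rho(\mathcal{A}_m(\Sigma))\geq 1$; we exhibit an initial condition violating~\eqref{eq:def:expsta}. Each block of $\mathcal{A}_m(\Sigma)$ is nonnegative (conditional expectation of $(J_k e^{A_j h_k})^{[m]}$, which is nonnegative because $J_k\geq 0$ and $A_j$ is Metzler make $J_k e^{A_j h_k}\geq 0$, and the $[m]$-image of a nonnegative matrix is nonnegative), so Lemma~\ref{lemma:PFeig} furnishes a nonzero nonnegative eigenvector $v=(v_1;\dotsc;v_N)$ with eigenvalue $\rho(\mathcal{A}_m(\Sigma))\geq 1$. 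By Lemma~\ref{lemma:cone}, each block decomposes as $v_i=\sum_\ell c_{i,\ell}\,x_{i,\ell}^{[m]}$ with $x_{i,\ell}\in\mathbb{R}^n$, so by linearity in $x_0$ of the map $x_0\mapsto Q_k$ (for fixed $\sigma_0$), choosing $\sigma_0=i$ and running the system for each $x_{i,\ell}$ in turn produces at least one initial condition for which the corresponding $\lVert Q_k\rVert_1$ does not decay. Since $y_k\geq 0$ almost surely in that case, each entry of $y_k^{[m]}$ is a nonnegative monomial of degree $m$ bounded by a constant times $\lVert y_k\rVert_m^m$, giving $\lVert Q_k\rVert_1\leq C\,E[\lVert y_k\rVert_m^m]$; hence $E[\lVert y_k\rVert_m^m]$ cannot decay, contradicting~\eqref{eq:def:expsta}. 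The main obstacle is this last step: converting the abstract non-decay of $Q_k$ into failure of~\eqref{eq:def:expsta} for a \emph{concrete} initial condition, since $v$ may mix contributions from different modes and different monomial directions. Lemma~\ref{lemma:cone} is precisely what makes this combinatorial decomposition into admissible initial states possible, and positivity of $\Sigma$ is what yields the two-sided comparison between $\lVert Q_k\rVert_1$ and $E[\lVert y_k\rVert_m^m]$.
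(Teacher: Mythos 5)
Your sufficiency direction is essentially the paper's: the recursion $Q_{k+1}=\mathcal A_m(\Sigma)Q_k$ is Proposition~\ref{proposition:diffeq} applied to the sampled system $\mathcal S\Sigma$, the bound $E[\norm{y_k}_m^m]\le n\onenorm{Q_k}$ is \eqref{eq:mnorm}, and the interpolation via $h_k\le T$ and $k_t>T^{-1}t-1$ is exactly how the paper transfers geometric decay in $k$ to exponential decay in $t$. That half is fine.

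The necessity direction has a genuine gap, and it is not the one you flag. The decomposition of the Perron eigenvector via Lemma~\ref{lemma:cone} is unproblematic and is precisely how the paper proves the discrete-time implication (stochastic stability $\Rightarrow$ Schur stability) inside Theorem~\ref{theorem:d:stbl}. The real problem is your last step: you conclude that $E[\norm{x(t_k)}^m]$ failing to decay in $k$ contradicts \eqref{eq:def:expsta}. It does not, because $t_k$ is random and Assumption~\ref{assumption:cont} gives no uniform positive lower bound on $h_k$ --- only $h_k>0$. The switching times may accumulate, so arbitrarily many samples $x(t_0),x(t_1),\dotsc$ can fall inside a bounded time window where \eqref{eq:def:expsta} imposes no decay whatsoever; exponential mean stability of $\Sigma$ therefore does not force $E[\norm{x_d(k)}^m]\to 0$, and no contradiction arises. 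This is exactly why the paper does not argue directly that $\mathcal S\Sigma$ is unstable. Instead it (i) truncates: $\Sigma^{(\tau)}$ zeroes the state at the first dwell time shorter than $\tau$, so every surviving interval has length at least $\tau$ and $\sum_{k}\norm{x_d(k)}^m\le C\tau^{-1}\int_0^\infty\norm{x(t)}^m\,dt$, whence $\mathcal S\Sigma^{(\tau)}$ is stochastically stable and $\rho(\mathcal A_m(\Sigma^{(\tau)}))<1$ for every $\tau>0$ (Proposition~\ref{proposition:tauSta}); (ii) lets $\tau\to 0$, which by Proposition~\ref{proposition:tauLim} only yields the non-strict bound $\rho(\mathcal A_m(\Sigma))\le 1$; and (iii) recovers strictness by first inflating to $\Sigma_\alpha$ (replacing $A_i$ by $A_i+\alpha I$), which stays exponentially stable for small $\alpha>0$ and satisfies $\rho(\mathcal A_m(\Sigma))<\rho(\mathcal A_m(\Sigma_\alpha))\le 1$ by Proposition~\ref{proposition:incr}. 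Your proposal contains none of this apparatus, and without it the necessity direction does not close.
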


Let us see an example. 
\begin{example}\upshape 
Let $\Sigma$ be a positive semi-Markovian jump linear system with the
two subsystems $\Sigma_1: dx/dt = A_1 x$ and $\Sigma_2: dx/dt = A_2 x$
given by
\begin{equation*}
\begin{gathered}
A_1 = \begin{bmatrix}
-2&0.2\\0.1&-2.3
\end{bmatrix},\
A_2 = \begin{bmatrix}
2.1&0.9\\0.2&0.3
\end{bmatrix}. 
\end{gathered}
\end{equation*}
We assume that $J_k = I$ with probability 1, $p_{11} = p_{22} = 0$, and
$p_{12} = p_{21} = 1$. Suppose that the transition
probability~\eqref{eq:trapro} is given by
\begin{equation}\label{eq:unif}
P(\sigma_{k+1}=1, h_k\leq t \mid \sigma_k = 2) =
\begin{cases}
0,\ t\leq a\\
{(t-a)}/{2a},\ a\leq t\leq 3a\\
1,\ t\geq 3a
\end{cases} 
\end{equation}
where $a>0$ is a constant and
\begin{equation}\label{eq:weibull}
P(\sigma_{k+1}=2, h_k\leq t \mid \sigma_k = 1) =
\begin{cases}
F(t;k,\lambda),\ t\leq t_p\\
1,\ t\geq t_p
\end{cases} 
\end{equation}
where $F(\cdot;k,\lambda)$ denotes the probability distribution function
of the Weibull distribution with the shape parameter $k > 0$ and the
scale parameter $\lambda>0$, i.e., $F(t;k,\lambda) =
1-e^{-(t/\lambda)^k}$ for $t\geq 0$ and $F(t;k,\lambda) = 0$ for $t <
0$, and $t_p > 0$ is the unique number satisfying $F(t_p; k,\lambda) =
1-p$ ($p>0$). $\Sigma$ is clearly positive and satisfies
Assumption~\ref{assumption:cont} with $T = \max(3a, t_p)$ and $R=1$.

{We can regard $\Sigma$ as a controlled system with controller failures.
The stable subsystem~$\Sigma_1$ models the controlled dynamics while the
 unstable one~$\Sigma_2$ models the open dynamics without a controller
in effect. The occurrence of a failure, whose probability is modeled by
the Weibull-like distribution~\eqref{eq:weibull}, gives rise to the
switching from $\Sigma_1$ to $\Sigma_2$. The time it takes for the
controller to be repaired is modeled by the uniform
distribution~\eqref{eq:unif}.}

{We check the first mean and mean square stability of $\Sigma$ using
Theorem~\ref{theorem:main} for the parameters $\lambda =3$, $k=10$, and
$p = 0.1$. Fig.~\ref{fig:Analysis} shows the graph of $\rho(\mathcal
A_1(\Sigma))$ and~$\sqrt{\rho(\mathcal A_2(\Sigma))}$ as the
constant~$a$ moves over $[0.8, 1.2]$.
\begin{figure}[tb]{%
\centering
\includegraphics[width=9cm]{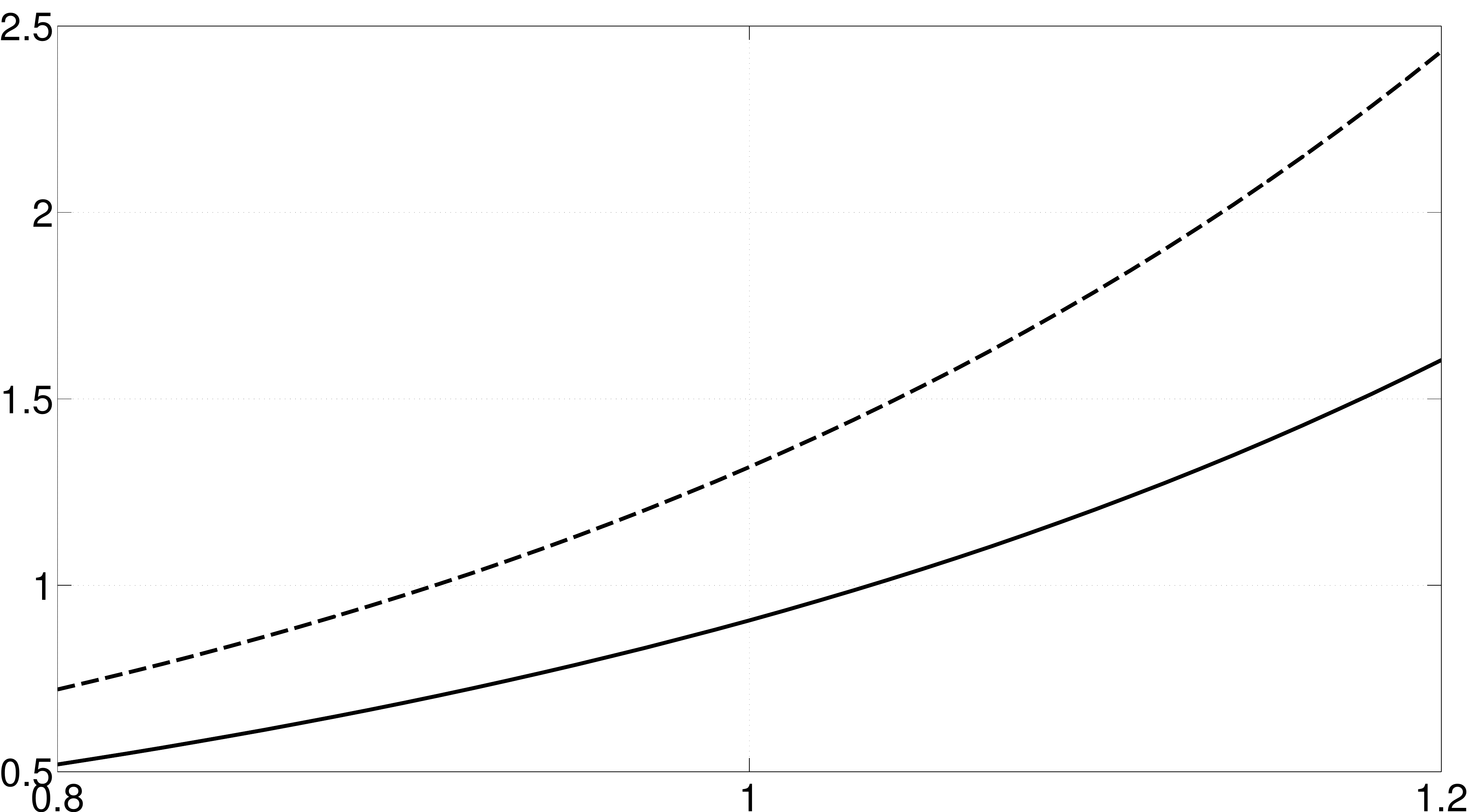}
\caption{Solid: $\rho(\mathcal A_1)$. Dashed: $\sqrt{\rho(\mathcal A_2)}$.}
\label{fig:Analysis}
}\end{figure}
We can see that $\Sigma$ is \add{exponentially} first mean stable if
$a<1.035$ while $\Sigma$ is \add{exponentially} mean square stable only
when $a<0.908$. Notice that the result in \cite{Antunes2013} does not
check the first mean stabilty because it deals with only even
exponents~$m$. A sample path of $\Sigma$ for $a=1$ is shown in
Fig.~\ref{figure:semistabilization}.}

\begin{figure}[tb]{
\centering
\includegraphics[width=9cm]{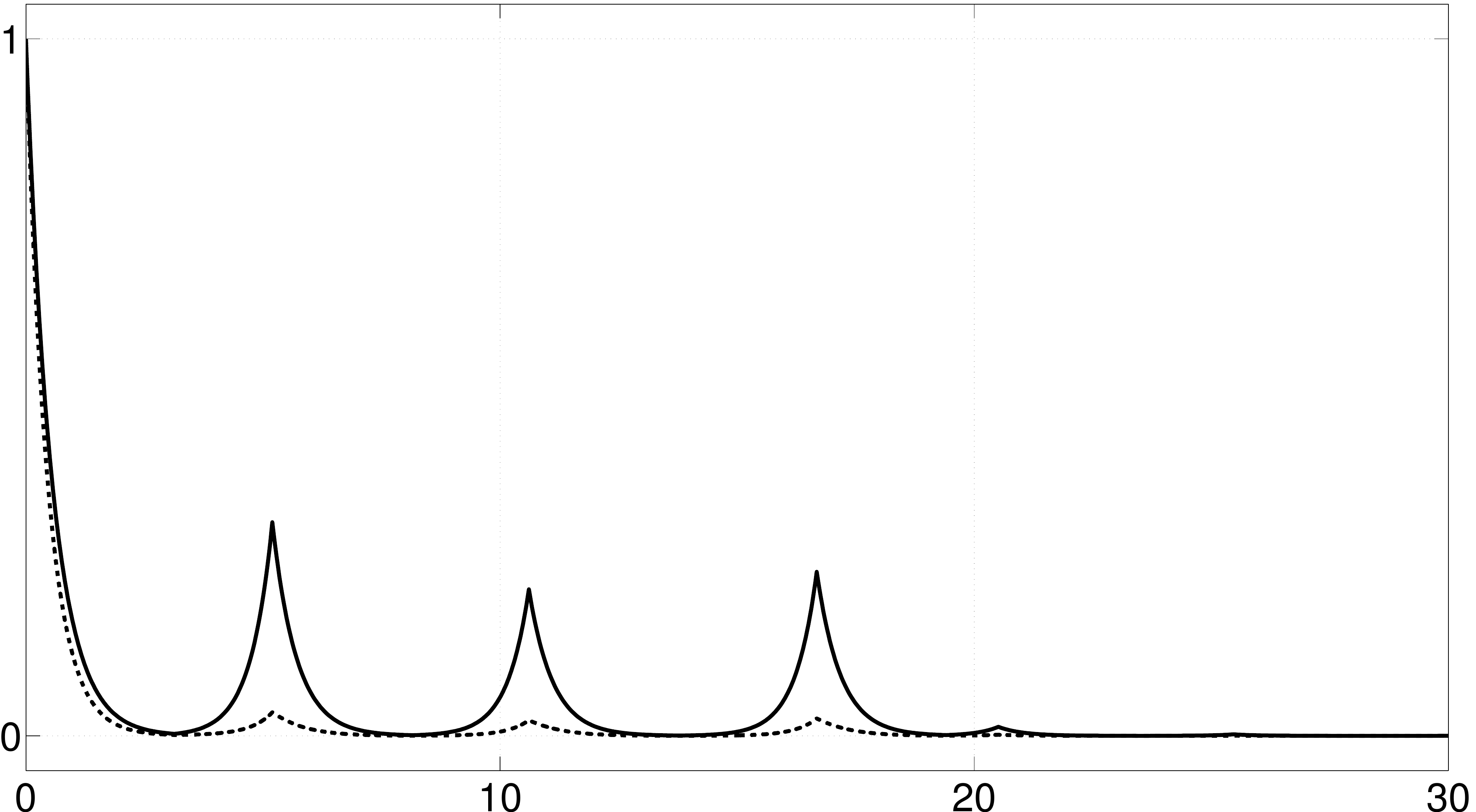}
\caption{A sample path of the semi-Markovian jump linear positive system. Solid: $x_1(t)$. Dotted: $x_2(t)$.} 
\label{figure:semistabilization}
}\end{figure}
\end{example}

We will prove Theorem~\ref{theorem:main} by first investigating the
stability of its discretization. For the trajectory $x$ of $\Sigma$ we
define the discrete-time stochastic process $\{x_d(k)\}_{k=0}^\infty$ by
\begin{equation*}
x_d(k) := x(t_k),\ k\geq 0.
\end{equation*}
Then, by \eqref{eq:hk>0}, $\{x_d(k)\}_{k=0}^\infty$ satisfies 
\begin{equation}\label{eq:def:Ssigma}
\mathcal S \Sigma: 
x_d(k+1) = J_k e^{A_{\sigma_k}h_k} x_d(k),\ k\geq 0.
\end{equation}
In the next section we analyze the stability of a class of stochastic
discrete-time switched systems that include the above defined $\mathcal
S \Sigma$. Based on the analysis Section~\ref{section:Proof} gives  the
proof of the main result of Theorem~\ref{theorem:main}.

\section{Discrete-time {Positive} Semi-Markovian Jump Linear Systems}
\label{section:stabDisc}

Let $\{\sigma_k\}_{k=0}^\infty$ be a time-homogeneous Markov chain
taking values in $\{1, \dotsc, N\}$ with the probability transition
matrix~$[p_{ij}]_{ij}$. We assume that $\sigma_0$ is a constant. Let
$\{F_k\}_{k=0}^\infty$ be another stochastic process on $\Omega$ taking
values in $\mathbb{R}^{n\times n}$. Define the discrete-time switched
system $\Sigma_d$ by
\begin{equation*}
\Sigma_d: x_d(k+1) = F_kx_d(k),\ x_d(0) = x_0. 
\end{equation*}

\begin{definition} \label{definition:d}
We say that $\Sigma_d$ is a \emph{discrete-time semi-Markovian jump
linear system} if the following \change{three}{two} conditions hold for every $k\geq
0$, $i, j \in \{1, \dotsc, N\}$, and every Borel subset~$B$ of
$\mathbb{R}^{n\times n}$.
\begin{enumerate}[label=D{\arabic*}.,ref=D{\arabic*}] 
\item (Markovian property) \label{item:d:assm:renew} It holds that
\begin{equation*}
P(\sigma_{k+1} = j, F_k \in B \mid \sigma_k, \dotsc, \sigma_0, F_{k-1}, \dotsc, F_0) =
P(\sigma_{k+1} = j, F_k \in B \mid \sigma_k). 
\end{equation*}

\item \label{item:d:assm:homo} (Time homogeneity) The expected
probability
\begin{equation*}
P(\sigma_{k+1} = j,  F_k \in B \mid \sigma_k = i)
\end{equation*}
does not depend on $k$. 
\end{enumerate}
Furthermore we say that $\Sigma_d$ is \emph{positive} if
\begin{enumerate}[label=D{\arabic*}.,ref=D{\arabic*}]\setcounter{enumi}{2}
\item \label{item:d:assm:>=0} (Positivity) $F_k$ is nonnegative with
probability 1. 
\end{enumerate}
\end{definition}

Each of the conditions~\ref{item:d:assm:renew}, \ref{item:d:assm:homo},
and~\ref{item:d:assm:>=0} corresponds to \ref{item:c:assm:renew},
\ref{item:c:assm:homo}, and~\ref{item:c:assm:>=0} in
Definition~\ref{definition:c:stbl}, respectively. Throughout this
section $\Sigma_d$ denotes a positive discrete-time semi-Markovian jump
linear system.

The stability of discrete-time semi-Markovian jump linear systems is
defined in a similar way as that of continuous-time ones.  As mentioned
in Remark~\ref{remark:on:norm}, any combination of norms can be also
used in the following definition.
\begin{definition} Let $m$ be a positive integer.
\begin{itemize}
\item $\Sigma_d$ is said to be \emph{exponentially $m$-th mean stable}
if there exist $C>0$ and $\beta > 0$ such that, for any $x_0$ and
$\sigma_0$, 
\begin{equation}\label{eq:def:d:expsta}
E[\norm{x_d(k)}^m] \leq Ce^{-\beta k} \norm{x_0}^m. 
\end{equation}

\item $\Sigma_d$ is said to be \emph{stochastically $m$-th mean stable}
if, for any $x_0$ and $\sigma_0$, 
\begin{equation*}
\sum_{k=0}^\infty E[\norm{x_d(k)}^m] < \infty. 
\end{equation*}
\end{itemize}
\end{definition}

We place the following assumption that corresponds to the
conditions~\ref{item:assm:h_k<T} and~\ref{item:assm:normJ<R} of
Assumption~\ref{assumption:cont}.
\begin{assm}\label{assumption:disc}
The expected value 
\begin{equation}\label{eq:calF.ij}
E[F_k^{{[m]}} \mid \sigma_k = i, \sigma_{k+1} = j]
\end{equation} exists for all $i, j\in \{1, \dotsc, N\}$. 
\end{assm}

Notice that, by \ref{item:d:assm:homo}, the expected
value~\eqref{eq:calF.ij} does not depend on $k$. The next theorem gives
a characterization of the stability of $\Sigma_d$ and is used in
Section~\ref{section:Proof} to prove Theorem~\ref{theorem:main}.

\begin{theorem} \label{theorem:d:stbl} 
The following statements are equivalent: 
\begin{enumerate}
\item \label{item:d:expstbl}
$\Sigma_d$ is exponentially $m$-th mean stable.

\item \label{item:d:stostable}
$\Sigma_d$ is stochastically $m$-th mean stable.

\item \label{item:d:Schur}  
The block matrix $\mathcal F_m \in \mathbb{R}^{(n_mN)\times (n_mN)}$ 
whose $(i,j)$-block is defined by
\begin{equation}\label{eq:def:calF}
[\mathcal F_m]_{ij} 
= 
p_{ji} E[F_k^{[m]} \mid \sigma_k = j, \sigma_{k+1} = i]
\in
\mathbb{R}^{n_m\times n_m}
\end{equation}
is Schur stable. 
\end{enumerate}
\end{theorem}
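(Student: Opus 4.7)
The plan is to reduce the analysis of $\Sigma_d$ to the deterministic linear system $\xi(k+1) = \mathcal{F}_m\,\xi(k)$ on $\mathbb{R}^{n_m N}$, where $\xi(k)$ is the stacked vector of conditional moments $\xi_i(k) := E[\mathbf{1}_{\sigma_k = i}\,x_d(k)^{[m]}] \in \mathbb{R}^{n_m}$ for $i = 1, \dotsc, N$. The key recursion is derived from \eqref{eq:def:A^[m]} together with the Markov property \ref{item:d:assm:renew} and time-homogeneity \ref{item:d:assm:homo}. Since $x_d(k)^{[m]}$ is measurable with respect to $\mathcal{M}(\sigma_0, \dotsc, \sigma_k, F_0, \dotsc, F_{k-1})$, conditioning on this $\sigma$-algebra and applying \ref{item:d:assm:renew}--\ref{item:d:assm:homo} entrywise to $F_k^{[m]}$ gives $E[\mathbf{1}_{\sigma_{k+1}=i}\, F_k^{[m]} \mid \sigma_k = j] = p_{ji}\,E[F_k^{[m]} \mid \sigma_k = j, \sigma_{k+1}=i]$. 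Multiplying by $x_d(k)^{[m]}$, taking expectation, and summing over $j$ yields $\xi_i(k+1) = \sum_j [\mathcal{F}_m]_{ij}\,\xi_j(k)$, a linear time-invariant system with initial condition $\xi(0) = e_{\sigma_0}\otimes x_0^{[m]}$.

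The bridge between $\Sigma_d$ and this linear system is a pair of two-sided size estimates. In one direction, each entry of $\xi_i(k)$ is the expectation of a (possibly signed) monomial of degree $m$ in the coordinates of $x_d(k)$, so $\|\xi(k)\| \leq c_1 E[\|x_d(k)\|^m]$ for a constant $c_1>0$, without any positivity assumption on $x_0$. In the reverse direction, for $x_0 \geq 0$, \ref{item:d:assm:>=0} inductively gives $x_d(k) \geq 0$ a.s.; then \eqref{eq:mnorm} together with $E[x_d(k)^{[m]}] = \sum_i \xi_i(k)$ yields $E[\|x_d(k)\|_m^m] \leq c_2 \|\xi(k)\|$.

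The implication \ref{item:d:Schur} $\Rightarrow$ \ref{item:d:expstbl} then follows: Schur stability of $\mathcal{F}_m$ gives $\|\xi(k)\| \leq C\rho^k\|\xi(0)\| = C\rho^k\|x_0\|^m$ with $\rho<1$, where the last equality uses \eqref{eq:norm:pi}, and the second size estimate transfers this to \eqref{eq:def:d:expsta} for $x_0 \geq 0$. The extension to general $x_0$ is immediate by componentwise splitting $x_0 = x_0^+ - x_0^-$ and using $\|a - b\|^m \leq 2^{m-1}(\|a\|^m + \|b\|^m)$ together with norm equivalence (Remark~\ref{remark:on:norm}). The implication \ref{item:d:expstbl} $\Rightarrow$ \ref{item:d:stostable} is the immediate geometric-series bound.

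For \ref{item:d:stostable} $\Rightarrow$ \ref{item:d:Schur}, the first size estimate combined with stochastic stability gives $\sum_{k=0}^\infty \|\mathcal{F}_m^k v\| < \infty$ for every initial vector $v = e_{i_0}\otimes x_0^{[m]}$ with $i_0\in\{1,\dotsc,N\}$ and $x_0\in\mathbb{R}^n$. By Lemma~\ref{lemma:cone} these vectors span $\mathbb{R}^{n_m N}$ over $\mathbb{R}$, so $\sum_k \|\mathcal{F}_m^k v\|<\infty$ for every $v$, which on a finite-dimensional space forces $\mathcal{F}_m^k \to 0$ and hence $\rho(\mathcal{F}_m) < 1$. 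The main obstacle is the careful derivation of the recursion: one must correctly identify the filtration with respect to which $x_d(k)^{[m]}$ is measurable and apply \ref{item:d:assm:renew}--\ref{item:d:assm:homo} in the right order to strip away dependence on the past before pulling $x_d(k)^{[m]}$ out of the conditional expectation; the remaining size estimates and span argument are then essentially mechanical.
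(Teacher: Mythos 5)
Your proposal is correct and follows essentially the same route as the paper: the moment recursion $\xi(k+1)=\mathcal F_m\,\xi(k)$ is exactly the paper's Proposition~\ref{proposition:diffeq} (the paper derives it via the conditional-independence Lemma~\ref{lemma:indp}, you via the tower property on the filtration of $(\sigma_0,\dotsc,\sigma_k,F_0,\dotsc,F_{k-1})$, which is the same content), and the two size estimates, Lemma~\ref{lemma:cone}, and the $x_0=x_0^+-x_0^-$ splitting all match the paper's argument. The only cosmetic difference is in \ref{item:d:stostable}~$\Rightarrow$~\ref{item:d:Schur}, where you conclude $\mathcal F_m^k\to 0$ directly from summability on a spanning set while the paper runs a contradiction against a Perron eigenvector of the nonnegative matrix $\mathcal F_m$; both are valid and of equal weight.
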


\begin{remark}
Theorem~\ref{theorem:d:stbl} extends  the stability characterizations of
discrete-time Markovian jump linear systems given in
\cite{Fang2002a,Costa2004} to semi-Markovian jump linear systems.
\end{remark}

The rest of this section is devoted to the proof of Theorem
\ref{theorem:d:stbl}. Let us first observe that, by the positivity of
$\Sigma_d$, the initial state $x_0$ can be assumed to be nonnegative
without loss of generality.

\begin{lemma}\label{lemma:x0>0wlog}
$\Sigma_d$ is exponentially $m$-th mean stable if and only if there
exist $C>0$ and $\beta>0$ such that \eqref{eq:def:d:expsta} holds for
any $x_0\geq 0$ and $\sigma_0$.
\end{lemma}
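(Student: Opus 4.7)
The only-if direction is trivial, so the content lies in showing that exponential $m$-th mean stability restricted to nonnegative initial data implies the same for all initial data. The plan is to exploit the linearity of $\Sigma_d$ in the initial condition together with the decomposition of an arbitrary vector into its positive and negative parts.

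Concretely, given an arbitrary $x_0 \in \mathbb{R}^n$, I would write $x_0 = x_0^+ - x_0^-$ where $x_0^\pm \geq 0$ are the componentwise positive and negative parts. For any fixed sample $\omega \in \Omega$ the state satisfies $x_d(k) = F_{k-1}(\omega)\cdots F_0(\omega) x_0$, and hence depends linearly on $x_0$. Therefore, denoting by $x_d^\pm(k)$ the trajectories of $\Sigma_d$ from the initial conditions $x_0^\pm$ (with the same $\sigma_0$ and the same realizations of $\sigma_k, F_k$), we have pointwise
\begin{equation*}
x_d(k) = x_d^+(k) - x_d^-(k).
\end{equation*}

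By Remark~\ref{remark:on:norm} we may work with any norm; I pick $\norm{\cdot}_1$, for which $\norm{x_0^+}_1 + \norm{x_0^-}_1 = \norm{x_0}_1$, and in particular $\norm{x_0^\pm}_1 \leq \norm{x_0}_1$. The triangle inequality and the elementary bound $(a+b)^m \leq 2^{m-1}(a^m+b^m)$ give
\begin{equation*}
\norm{x_d(k)}_1^m \leq 2^{m-1}\bigl(\onenorm{x_d^+(k)}^m + \onenorm{x_d^-(k)}^m\bigr).
\end{equation*}
Taking expectations and applying the hypothesis separately to the two nonnegative initial conditions $x_0^\pm$ yields
\begin{equation*}
E[\norm{x_d(k)}_1^m] \leq 2^{m-1}C e^{-\beta k}\bigl(\onenorm{x_0^+}^m + \onenorm{x_0^-}^m\bigr) \leq 2^{m}Ce^{-\beta k}\onenorm{x_0}^m,
\end{equation*}
so \eqref{eq:def:d:expsta} holds for all $x_0$ with the constant $C$ replaced by $2^mC$ and the same decay rate $\beta$.

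There is no genuine obstacle: the only subtlety is being explicit that the decomposition works at the trajectory level because the dynamics $x_d(k+1)=F_k x_d(k)$ is linear in $x_d(k)$ even though $F_k$ is random. Once that observation is made the estimate is a direct computation.
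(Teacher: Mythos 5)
Your argument is correct and is essentially the paper's own proof: both decompose $x_0$ into its nonnegative positive and negative parts, use the pointwise linearity of the trajectory in $x_0$ to write $x_d(k)=x_d^+(k)-x_d^-(k)$, and conclude via the triangle inequality and the elementary power bound (the paper works with the 2-norm and the cruder constant $2^{m+1}C$, you with the 1-norm and $2^mC$, which is immaterial by norm equivalence). No gap.
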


\begin{proof}
The necessity part is obvious. Let us prove the sufficiency part. Assume
that there exist $C>0$ and $\beta>0$ such that \eqref{eq:def:d:expsta}
holds for any $x_0\geq 0$ and $\sigma_0$.  Let $x_0\in\mathbb{R}^n$ be
arbitrary. Define $x_0^+, x_0^- \in \mathbb{R}^n_+$ as the entry-wise
maximum and minimum $x_0^+ := \max(x_0, 0)$ and $x_0 := \max(-x_0, 0)$.
Let $x^+_d(k)$ and $x^-_d(k)$ denote the solutions of $\Sigma_d$ with
the initial states $x_0^+$ and $x_0^-$, respectively. Since $x_0 = x_0^+
- x_0^-$ we have $x_d(k) = x_d^+(k) - x_d^-(k)$.  Then the general
inequalities $(a+b)^m \leq 2^m(a^m + b^m)$ ($a, b\geq 0$) and
$\norm{x_0^+}_2^m +  \norm{x_0^-}_2^m \leq 2\norm{x_0}^m_2$ show that
\begin{equation*}
\begin{aligned}
E[\norm{x_d(k)}^m_2]
&\leq
E\left[(\norm{x_d(k;x_0^+)}_2 + \norm{x_d(k;x_0^-)}_2)^m\right]
\\
&\leq
2^m E\left[\norm{x_d(k;x_0^+)}_2^m + \norm{x_d(k;x_0^-)}_2^m\right]
\\
&\leq
2^m Ce^{-\beta k} (\norm{x_0^+}_2^m + \norm{x_0^-}_2^m)
\\
&\leq
2^{m+1} Ce^{-\beta k} \norm{x_0}_2^m. 
\end{aligned}
\end{equation*}
Therefore $\Sigma_d$ is exponentially $m$-th mean stable. 
\end{proof}

Let us introduce the stochastic process $\{\zeta(k)\}_{k=0}^\infty$
taking values in the set of standard unit vectors of $\mathbb{R}^N$ and
defined by
\begin{equation*}
\zeta(k) = e_{\sigma_k}. 
\end{equation*}
We will need the next technical lemma.

\begin{lemma}\label{lemma:indp}
Let $k\geq 0$ and $i, j\in \{1, \dotsc, N\}$ be arbitrary. Assume
$P(\sigma_k =i)\neq 0$.  Define the  probability space $(\Omega',
\mathcal M', P')$ by
\begin{equation}\label{eq:Omega'}
\begin{aligned}
\Omega' & = \{\omega\in \Omega: \sigma_k = i\}, \\
\mathcal M' &= \{M' \subset \Omega':  M' \in \mathcal M\}, \\
P'(M') &= P(M')/P(\Omega'). 
\end{aligned}
\end{equation}
Then the random variables $\zeta(k+1)_j F_k^{[m]}$ and $x_d(k)^{[m]}$
are independent on $\Omega'$.
\end{lemma}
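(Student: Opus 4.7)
The plan is to reduce the desired independence to the Markovian property~\ref{item:d:assm:renew}. I would first identify the $\sigma$-algebras with respect to which each of the two variables is measurable. Since $x_d(k) = F_{k-1}F_{k-2}\cdots F_0 x_0$ depends only on the initial data and on $F_0,\dotsc,F_{k-1}$, the vector $x_d(k)^{[m]}$ is measurable with respect to
\[
\mathcal{G}_k := \mathcal{M}(\sigma_0,\dotsc,\sigma_{k-1}, F_0,\dotsc,F_{k-1}),
\]
while $\zeta(k+1)_j F_k^{[m]}$ is measurable with respect to $\mathcal{H}_k := \mathcal{M}(\sigma_{k+1}, F_k)$.

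Next I would verify that $\mathcal{G}_k$ and $\mathcal{H}_k$ are independent as sub-$\sigma$-algebras of $\mathcal{M}'$ on the restricted space $(\Omega',\mathcal{M}',P')$. For any Borel sets $A\subset \{1,\dotsc,N\}$, $B\subset \mathbb{R}^{n\times n}$, and any $C\in\mathcal{G}_k$, I would write
\[
P\bigl(C\cap\{\sigma_{k+1}\in A,\,F_k\in B\}\cap\{\sigma_k=i\}\bigr)
= E\bigl[\mathbf{1}_C\,\mathbf{1}_{\{\sigma_k=i\}}\,P(\sigma_{k+1}\in A,\,F_k\in B\mid \mathcal{G}_k\vee\mathcal{M}(\sigma_k))\bigr].
\]
By~\ref{item:d:assm:renew}, the inner conditional probability is a function of $\sigma_k$ alone, and on $\{\sigma_k=i\}$ it equals the deterministic constant $q := P(\sigma_{k+1}\in A,\,F_k\in B\mid \sigma_k=i)$. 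Pulling this constant out of the expectation and dividing by $P(\sigma_k=i)$ gives
\[
P'\bigl(C\cap\{\sigma_{k+1}\in A,\,F_k\in B\}\bigr) = q\cdot P'(C),
\]
and taking $C=\Omega'$ identifies $q$ with $P'(\sigma_{k+1}\in A,\,F_k\in B)$. Hence $\mathcal{G}_k$ and $\mathcal{H}_k$ are independent on $\Omega'$.

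Since $x_d(k)^{[m]}$ is $\mathcal{G}_k$-measurable and $\zeta(k+1)_j F_k^{[m]}$ is $\mathcal{H}_k$-measurable, the two random variables are independent on $\Omega'$, as desired. The only point requiring care is the bookkeeping between the \emph{conditional} independence provided by~\ref{item:d:assm:renew} and the \emph{absolute} independence on $\Omega'$; this translation works cleanly because restricting to the event $\{\sigma_k=i\}$ and conditioning on it coincide, and the regular conditional probability furnished by D1 is a function of $\sigma_k$, hence a constant on $\Omega'$.
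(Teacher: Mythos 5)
Your proof is correct and follows essentially the same route as the paper's: both reduce the claim to the Markovian property~\ref{item:d:assm:renew} by observing that $x_d(k)$ is measurable with respect to the past $(\sigma_0,\dotsc,\sigma_{k-1},F_0,\dotsc,F_{k-1})$ while the conditional law of $(\sigma_{k+1},F_k)$ given that past depends only on $\sigma_k$ and is therefore constant on $\Omega'$. Your write-up is in fact slightly more explicit than the paper's appendix, which carries out the same reduction directly on the events $\{\zeta(k+1)_jF_k\in B_1\}$ and $\{x_d(k)\in B_2\}$ without spelling out the measurability bookkeeping that you supply.
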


\begin{proof}
See Appendix~\ref{appendix:proof:lemma:indp}. 
\end{proof}

This lemma proves the next proposition, which plays the key role in the
proof of Theorem~\ref{theorem:d:stbl}.

\begin{proposition}\label{proposition:diffeq}
{The matrix $\mathcal F_m$ is nonnegative.} {Moreover,
for} every $k\geq 0$,
\begin{equation}\label{eq:diff:calF}
E\left[\zeta(k+1)\otimes x_d(k+1)^{[m]}\right] 
= 
\mathcal F_m E\left[\zeta(k)\otimes x_d(k)^{[m]}\right]. 
\end{equation}
\end{proposition}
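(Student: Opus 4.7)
The plan is to treat the two assertions in sequence. For the nonnegativity of $\mathcal F_m$, I would first argue that $F_k^{[m]} \geq 0$ with probability $1$. Since $F_k \geq 0$ almost surely by~\ref{item:d:assm:>=0}, each entry of $F_k x$ is a linear form in $x_1,\dotsc,x_n$ with nonnegative coefficients, so every entry of $(F_k x)^{[m]}$ is a degree-$m$ polynomial in these variables with nonnegative coefficients. Because $x^{[m]}$ lists all monomials of degree $m$, matching coefficients in $(F_k x)^{[m]} = F_k^{[m]} x^{[m]}$ forces $F_k^{[m]} \geq 0$ almost surely. Conditional expectation preserves this nonnegativity and $p_{ji}\geq 0$, so each block $[\mathcal F_m]_{ij}$, and hence $\mathcal F_m$, is nonnegative.

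For the recursion~\eqref{eq:diff:calF}, I would match the two sides block by block. Using~\eqref{eq:def:A^[m]} we have $x_d(k+1)^{[m]} = F_k^{[m]} x_d(k)^{[m]}$, and $\zeta(k+1)_i$ is the indicator of $\{\sigma_{k+1}=i\}$. The $i$-th block of the left-hand side of~\eqref{eq:diff:calF} therefore equals
\[
E\bigl[\zeta(k+1)_i F_k^{[m]} x_d(k)^{[m]}\bigr]
= \sum_{j} P(\sigma_k = j)\, E\bigl[\zeta(k+1)_i F_k^{[m]} x_d(k)^{[m]} \mid \sigma_k = j\bigr],
\]
where only $j$ with $P(\sigma_k = j)>0$ contribute; indices with $P(\sigma_k = j)=0$ can be dropped since they contribute $0$ to both sides.

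On each conditional probability space $\Omega'$ corresponding to $\{\sigma_k = j\}$, I would invoke Lemma~\ref{lemma:indp} (with the roles of its indices $i$ and $j$ interchanged) to conclude that $\zeta(k+1)_i F_k^{[m]}$ and $x_d(k)^{[m]}$ are independent on $\Omega'$, so each conditional expectation factors. The first factor is
\[
E\bigl[\zeta(k+1)_i F_k^{[m]} \mid \sigma_k = j\bigr]
= p_{ji}\, E\bigl[F_k^{[m]} \mid \sigma_k = j,\ \sigma_{k+1} = i\bigr]
= [\mathcal F_m]_{ij}
\]
by~\eqref{eq:def:calF}, while $P(\sigma_k = j)$ times the second factor recombines into $E[\zeta(k)_j x_d(k)^{[m]}]$, the $j$-th block of $E[\zeta(k)\otimes x_d(k)^{[m]}]$. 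Summing over $j$ gives exactly the $i$-th block of $\mathcal F_m E[\zeta(k)\otimes x_d(k)^{[m]}]$. The one substantive step is the independence claim: all of the semi-Markovian structure of $\Sigma_d$ is concentrated in Lemma~\ref{lemma:indp}, and once that lemma is available the proof reduces to bookkeeping, so I expect the factoring invocation to be the only delicate point.
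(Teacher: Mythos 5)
Your proposal is correct and follows essentially the same route as the paper's proof: both decompose the $i$-th block over the events $\{\sigma_k=j\}$, invoke Lemma~\ref{lemma:indp} on the conditional probability space to factor the expectation, and identify the first factor with $[\mathcal F_m]_{ij}$ via \eqref{eq:def:calF}, with the same handling of the degenerate case $P(\sigma_k=j)=0$. Your coefficient-matching argument for $F_k^{[m]}\geq 0$ merely spells out what the paper dismisses as ``clear from~\ref{item:d:assm:>=0}.''
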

\begin{proof} 
The nonnegativity of $\mathcal F_m$ is clear from~\ref{item:d:assm:>=0}.
To show \eqref{eq:diff:calF} let us fix arbitrary $x_0$ and $\sigma_0$.
By the definition of Kronecker products,
\begin{equation}\label{eq:Kron:decom}
\zeta(k+1)\otimes x_d(k+1)^{[m]}
=
\begin{bmatrix}
\zeta(k+1)_1 x_d(k+1)^{[m]}
\\
\vdots
\\
\zeta(k+1)_N x_d(k+1)^{[m]}
\end{bmatrix}. 
\end{equation}
Now we recall $\sum_{i=1}^N \zeta(k)_i = 1$. Since the
equation~\eqref{eq:def:A^[m]} gives $x_d(k+1)^{[m]} = F_k^{[m]}
x_d(k)^{[m]}$ we have   $x_d(k+1)^{[m]} = \sum_{i=1}^N \zeta(k)_i
F_{k}^{[m]}x_d(k)^{[m]}$.  Therefore, for a fixed $j$,
\begin{equation}\label{eq:E[zeta.x_d]}
E\left[\zeta(k+1)_j x_d(k+1)^{[m]}\right] 
= 
\sum_{i=1}^N 
E\left[\zeta(k+1)_j \zeta(k)_i F_k^{[m]} x_d(k)^{[m]}\right]. 
\end{equation}
Let us temporally fix also $i$ and assume that $P(\sigma_k = i) \neq 0$.
Let $(\Omega', \mathcal M', P')$ be the probability space defined by
\eqref{eq:Omega'} and let $E'[\cdot]$ denote the expected value on
$\Omega'$. Notice that, for a random variable $X$ on $\Omega$, we have
\begin{equation*}
E[\zeta(k)_i X] 
= 
E[1_{\{\sigma_k = i\}} X] 
=
E'[X]P(\sigma_k = i). 
\end{equation*}
Then, Lemma~\ref{lemma:indp} yields that 
\begin{equation}\label{eq:E[long]}
\begin{aligned}
E\left[\zeta(k+1)_j \zeta(k)_i F_k^{[m]} x_d(k)^{[m]}\right]
&=
P(\sigma_k = i)E'\left[\zeta(k+1)_j F_k^{[m]} x_d(k)^{[m]}\right]
\\
&=
P(\sigma_k = i)E'\left[\zeta(k+1)_j F_k^{[m]}\right] E'\left[x_d(k)^{[m]}\right]
\\
&=
\frac{E\left[\zeta(k+1)_j F_k^{[m]} \zeta(k)_i\right]}{P(\sigma_k = i)} 
E\left[\zeta(k)_i x_d(k)^{[m]}\right]. 
\end{aligned}
\end{equation}
Since 
\begin{equation*}
\begin{aligned}
E\left[\zeta(k+1)_j F_k^{[m]} \zeta(k)_i\right]
&=
E\left[F_k^{[m]} 1_{\{\sigma_k = i, \sigma_{k+1} = j\}}\right]
\\
&=
P(\sigma_k = i, \sigma_{k+1} = j)
E\left[F_k^{[m]} \mid \sigma_k = i, \sigma_{k+1} = j\right], 
\end{aligned}
\end{equation*}
by \eqref{eq:E[long]} we have 
\begin{equation*}
\begin{aligned}
E\left[\zeta(k+1)_j \zeta(k)_i F_k^{[m]} x_d(k)^{[m]}\right]
&=
p_{ij} E[F_k^{[m]} \mid \sigma_k = i, \sigma_{k+1} = j] 
E\left[\zeta(k)_i x_d(k)^{[m]}\right]
\\
&=
[\mathcal F_m]_{ji} E\left[\zeta(k)_i x_d(k)^{[m]}\right], 
\end{aligned}
\end{equation*}
where $[\mathcal F_m]_{ji}$ is defined by \eqref{eq:def:calF}.  Notice
that this equation is valid even when $P(\sigma_k = i) = 0$ because, in
this case, the left and right hand sides of the equation are both $0$.
Hence, by \eqref{eq:E[zeta.x_d]},
\begin{equation*}
E[\zeta(k+1)_j x_d(k+1)^{[m]}] = 
\begin{bmatrix}
 [\mathcal F_m]_{j1} & \cdots& [\mathcal F_m]_{jN}
\end{bmatrix}
E[\zeta(k)\otimes x_d(k)^{[m]}]. 
\end{equation*}
This equation and \eqref{eq:Kron:decom} prove \eqref{eq:diff:calF}. 
\end{proof}

Now we prove Theorem~\ref{theorem:d:stbl}. 

\defproof{Theorem~\ref{theorem:d:stbl}}
\begin{proof}
Let us show the cycle [\ref{item:d:expstbl} $\Rightarrow$
\ref{item:d:stostable} $\Rightarrow$~\ref{item:d:Schur} $\Rightarrow$
\ref{item:d:expstbl}].

[\ref{item:d:expstbl} $\Rightarrow$~\ref{item:d:stostable}]:\quad If
$\Sigma_d$ is exponentially $m$-th mean stable then $\Sigma_d$ is
clearly stochastically $m$-th mean stable. 

[\ref{item:d:stostable} $\Rightarrow$~\ref{item:d:Schur}]:\quad Suppose
that $\Sigma_d$ is stochastically $m$-th mean stable. Assume
$\rho(\mathcal F_m)\geq 1$ to derive a contradiction. Since $\mathcal
F_m$ is nonnegative it has an eigenvector $v$ corresponding to the
eigenvalue $\rho(\mathcal F_m)$ by Lemma~\ref{lemma:PFeig}.  For this
vector $v$, by Lemma~\ref{lemma:cone} there exist  $c_1$, $\dotsc$,
$c_\ell \in \mathbb{R}$,  $\sigma_0^{(1)}, \dotsc, \sigma_0^{(\ell)} \in
\{1, \dotsc, N\}$, and $x_0^{(1)}, \dotsc, x_0^{(\ell)} \in
\mathbb{R}^n$ such that $v = \sum_{i=1}^\ell c_i \zeta_0^{(i)} \otimes
(x_0^{(i)})^{[m]}$. Let $x_d^{(i)}(k)$ be the solution of $\Sigma_d$
with the initial state $x_0^{(i)}$ and mode $\sigma_0^{(i)}$.  Then, by
\eqref{eq:diff:calF},
\begin{equation*}
\sum_{i=1}^\ell 
c_i E[\zeta^{(i)}(k) \otimes x_d^{(i)}(k)^{[m]}]
=
\mathcal F_m^k v
=
\rho(\mathcal F_m)^k v.
\end{equation*}
Therefore, by \eqref{eq:norm:pi}, 
\begin{equation*}
\begin{aligned}
\rho(\mathcal F_m)^k \norm{v}_2
&\leq
\sum_{i=1}^\ell \abs{c_i}
E\left[
\twonorm{\zeta^{(i)}(k) \otimes x_d^{(i)}(k)^{[m]}}
\right]
\\
&=
\sum_{i=1}^\ell 
\abs{c_i} E\left[\twonorm{x_d^{(i)}(k)}^m\right]
\end{aligned}
\end{equation*}
because $\zeta^{(i)}(k)$ takes its value in the set of standard unit
vectors.  Thus, the stochastic $m$-th mean stability of $\Sigma_d$ shows
that
\begin{equation*}
\begin{aligned}
\sum_{k=0}^{\infty} \rho(\mathcal F_m)^k \norm{v}_2
&\leq
\sum_{i=1}^\ell \abs{c_i}  \sum_{k=0}^{\infty} 
E\left[\twonorm{x_d^{(i)}(k)}^m\right] 
< 
\infty. 
\end{aligned}
\end{equation*}
This gives a contradiction because $\rho(\mathcal F_m)\geq 1$ and $v\neq
0$.

{[\ref{item:d:Schur} $\Rightarrow$
\ref{item:d:expstbl}]}:\quad    Suppose $\rho(\mathcal F_m) < 1$. Then
we can take \cite[Lemma~5.6.10]{Horn1990} a norm $\nnnorm{\cdot}$ on
$\mathbb{R}^{n_mN}$ such that 
\begin{equation*} 
\nnnorm{\mathcal F_m} < 1. 
\end{equation*}
Let $x_0 \geq 0$ and $\sigma_0$ be arbitrary. Since $x_d(k)$  is
nonnegative, \eqref{eq:mnorm} shows that 
\begin{equation}\label{eq:tech1}
E\left[ \norm{x_d(k)}_m^m \right] 
\leq 
n \oneNorm{E[x_d(k)^{[m]}]}. 
\end{equation}
Since $\sum_{i=1}^N \zeta(k)_i = 1$ and the 1-norm is linear on a
positive orthant, 
\begin{equation}\label{eq:tech2}
\begin{aligned}
\oneNorm{E[x_d(k)^{[m]}]}
&=
\oneNorm{E\left[\sum_{i=1}^{m} \zeta(k)_ix_d(k)^{[m]}\right]}
\\
&=
\sum_{i=1}^{m} \oneNorm{E[\zeta(k)_ix_d(k)^{[m]}]}
\\
&=
\oneNorm{
\begin{bmatrix}
E\left[\zeta(k)_1 x_d(k)^{[m]}\right]
\\
\vdots
\\
E\left[\zeta(k)_N x_d(k)^{[m]}\right]
\end{bmatrix}
}
\\
&=
\onenorm{E[\zeta(k) \otimes x_d(k)^{[m]}]}
\\
&=
\onenorm{\mathcal F_m^k (\zeta_0 \otimes x_0^{[m]})}. 
\quad
\text{(by \eqref{eq:diff:calF})}
\end{aligned}
\end{equation}
By the equivalence of the norms on a finite-dimensional vector space,
there exist positive constants $C_1$ and $C_2$ such that 
\begin{equation*}
\begin{aligned}
\onenorm{\mathcal F_m^k (\zeta_0 \otimes x_0^{[m]})}
&\leq
C_1\nnnorm{\mathcal F_m^k (\zeta_0\otimes x_0^{[m]})}
\\
&\leq
C_1\nnnorm{\mathcal F_m}^k \nnnorm{\zeta_0\otimes x_0^{[m]}}
\\
&\leq
C_1 C_2 \nnnorm{\mathcal F_m}^k \twonorm{\zeta_0\otimes x_0^{[m]}}
\\
&=
C_1 C_2 \nnnorm{\mathcal F_m}^k \twonorm{x_0}^m.
\quad 
\text{(by \eqref{eq:norm:pi})}
\end{aligned}
\end{equation*}
This inequality together with \eqref{eq:tech1} and \eqref{eq:tech2}
proves the exponential convergence \eqref{eq:def:d:expsta} because
$\nnnorm{\mathcal F_m} < 1$. Thus, since $x_0\geq 0$ and $\sigma_0$ were
arbitrary, Lemma~\ref{lemma:x0>0wlog} shows the exponential $m$-th mean
stability of $\Sigma_d$.
\end{proof}
\undefproof

\section{Proof of the Main Result}
\label{section:Proof}

This section gives the proof of Theorem~\ref{theorem:main}.  We
separately prove sufficiency and necessity. Let $\Sigma$ be a
continuous-time positive semi-Markovian jump linear system satisfying
Assumption~\ref{assumption:cont} and let $\mathcal S \Sigma$ be its
discretization defined by \eqref{eq:def:Ssigma}.

\subsection{Proof of Sufficiency}

Let us begin by checking that the discrete-time system $\mathcal S
\Sigma$ is a discrete-time {positive} semi-Markovian jump linear
system. 

\begin{lemma}\label{lemma:satisfiesAssm}
$\mathcal S \Sigma$ is a discrete-time positive semi-Markovian jump
linear system {satisfying Assumption~\ref{assumption:disc}}.
\end{lemma}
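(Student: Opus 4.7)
The plan is to verify each of the three defining conditions D1--D3 for $\mathcal S\Sigma$ together with Assumption~\ref{assumption:disc}, with $F_k = J_k e^{A_{\sigma_k} h_k}$. Three of the four items are essentially direct; the Markov property D1 is the only one requiring real work because the natural conditioning available from $\Sigma$ sits at the level of $(J_k, t_k, \sigma_k)$, not at the coarser level of $F_k$.

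For D1, the key observation is the chain of inclusions
\begin{equation*}
\mathcal M(\sigma_k)
\subset
\mathcal M(\sigma_k,\dotsc,\sigma_0, F_{k-1},\dotsc,F_0)
\subset
\mathcal M(\sigma_k,\dotsc,\sigma_0, t_k,\dotsc,t_0, J_{k-1},\dotsc,J_0),
\end{equation*}
which holds because each $F_\ell = J_\ell e^{A_{\sigma_\ell} h_\ell}$ is a measurable function of $(J_\ell,\sigma_\ell,h_\ell)$. An event of the form $\{\sigma_{k+1}=j,\ F_k\in B\}$ is the preimage of a Borel set in $\{1,\dotsc,N\}\times\mathbb{R}^{n\times n}\times\mathbb{R}^{n\times n}\times\mathbb{R}_+$ under the measurable map $(\sigma_{k+1},J_k,\sigma_k,h_k)\mapsto (\sigma_{k+1},J_k e^{A_{\sigma_k} h_k})$, so C1 yields that the conditional probability given the largest $\sigma$-algebra above equals the conditional probability given $\sigma_k$ alone. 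I would then apply Lemma~\ref{lem:filtering} to $X=1_{\{\sigma_{k+1}=j,\ F_k\in B\}}$ with $\mathcal M_1=\mathcal M(\sigma_k)$, $\mathcal M_2$ the middle algebra, and $\mathcal M_3$ the largest algebra to conclude D1.

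For D2, time homogeneity of the joint law of $(\sigma_{k+1},h_k,J_k)$ given $\sigma_k=i$ provided by C2 is preserved by the deterministic mapping $(\sigma_{k+1},h_k,J_k,\sigma_k)\mapsto(\sigma_{k+1},J_k e^{A_{\sigma_k} h_k})$, hence the law of $(\sigma_{k+1},F_k)$ given $\sigma_k=i$ does not depend on $k$. For D3, $A_{\sigma_k}$ is Metzler so $e^{A_{\sigma_k} h_k}\geq 0$ as noted in the preliminaries, and $J_k\geq 0$ almost surely by C3, so their product $F_k$ is nonnegative with probability one. For Assumption~\ref{assumption:disc}, parts~\ref{item:assm:h_k<T} and~\ref{item:assm:normJ<R} of Assumption~\ref{assumption:cont} give $h_k\leq T$ and $\norm{J_k}\leq R$; combining with the finiteness of $\{A_1,\dotsc,A_N\}$ yields a uniform bound on $\norm{F_k}$, whence every entry of $F_k^{[m]}$ is bounded and the conditional expectation~\eqref{eq:calF.ij} exists.

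I expect the sandwich argument for D1 to be the only delicate step, since it is tempting but incorrect to apply C1 directly with the coarser $\sigma$-algebra $\mathcal M(\sigma_k,\dotsc,\sigma_0, F_{k-1},\dotsc,F_0)$. Lemma~\ref{lem:filtering} is precisely the tool that bridges this gap cleanly.
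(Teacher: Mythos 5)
Your proof is correct and follows essentially the same route as the paper: the same $\sigma$-algebra sandwich $\mathcal M(\sigma_k)\subset\mathcal M(\sigma_k,\dotsc,F_0)\subset\mathcal M(\sigma_k,\dotsc,t_0,J_{k-1},\dotsc,J_0)$ resolved via Lemma~\ref{lem:filtering} for D1, with D2, D3, and Assumption~\ref{assumption:disc} handled by the same direct measurability and boundedness observations. Your remark on why C1 cannot be applied directly to the coarser middle algebra matches the paper's motivation for introducing that lemma.
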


\begin{proof}
Let $\{\sigma_k\}_{k=0}^\infty$, $\{t_k\}_{k=0}^\infty$, and
$\{J_k\}_{k=0}^\infty$ be the stochastic processes defining $\Sigma$ and
define $F_k = J_k e^{A_{\sigma_k} h_k}$.  Let us show the first
condition~\ref{item:d:assm:renew}.  Define the $\sigma$-algebras
$\mathcal M_1$, $\mathcal M_2$, and $\mathcal M_3$ on $\Omega$ by
\begin{equation*}
\begin{aligned}
\mathcal M_1
&= 
\mathcal M(\sigma_k),
\\
\mathcal M_2 
&= 
\mathcal M(\sigma_k, \dotsc, \sigma_0, F_{k-1}, \dotsc, F_0),
\\
\mathcal M_3 
&= 
\mathcal M(\sigma_k, \dotsc, \sigma_0, t_k, \dotsc, t_0, J_{k-1}, \dotsc, J_0). 
\end{aligned}
\end{equation*}
 {Then one can see that
\begin{equation}\label{eq:3tower}
\mathcal M_1
\subset \mathcal M_2 
\subset \mathcal M_3. 
\end{equation}
The first part of this inclusion is obvious. To show the second part it
is sufficient to show that the function~$F_\ell$ is measurable on
$\mathcal M_3$ for every $\ell=0$, $\dotsc$, $k-1$. Let us fix an
$\ell$. Since both $\sigma_\ell$ and $h_\ell$ are measurable on
$\mathcal M_3$ so is $A_{\sigma_\ell}h_\ell$. Since the matrix
exponential function on $\mathbb{R}^{n\times n}$ is continuous, the
mapping~$e^{A_{\sigma_\ell}h_\ell}\colon \Omega \to \mathbb{R}^{n\times
n}$ is measurable on $\mathcal M_3$. Thus the measurability of $J_\ell$
actually proves that $F_\ell$ is measurable on $\mathcal M_3$, which
completes the proof of $\mathcal M_2 \subset \mathcal M_3$. Now let $j
\in \{1, \dotsc, N\}$ and a Borel set $B\subset \mathbb{R}^{n\times n}$
be arbitrary and define $f = \chi_{ \{\sigma_{k+1} = j, F_k \in B\} } $.
Since~\ref{item:c:assm:renew} shows $E[f \mid \mathcal M_3] = E[f \mid
\mathcal M_1]$, from Lemma~\ref{lem:filtering} and \eqref{eq:3tower} we
can see $E[f \mid \mathcal M_2] = E[f\mid \mathcal M_1]$, which
immediately proves \ref{item:d:assm:renew}.}

The second condition~\ref{item:d:assm:homo} is obviously true {by
\ref{item:c:assm:homo}} because $F_k$ is a measurable function of
$\sigma_k$, $h_k$, and $J_k$. \ref{item:d:assm:>=0} follows from the
positivity condition~\ref{item:c:assm:>=0}. Finally
Assumption~\ref{assumption:disc} holds by  the
conditions~\ref{item:assm:h_k<T} and~\ref{item:assm:normJ<R} of
Assumption~\ref{assumption:cont}.
\end{proof}

The next corollary immediately follows from the
definition~\eqref{eq:def:calA} of the matrix $\mathcal A_m(\Sigma)$,
Theorem~\ref{theorem:d:stbl}, and Lemma~\ref{lemma:satisfiesAssm}. 

\begin{corollary}\label{corollary:Schur<->DiscSta}
The following statements are equivalent. 
\begin{itemize}
\item $\mathcal S\Sigma$ is exponentially $m$-th mean stable.

\item $\mathcal S\Sigma$ is stochastically $m$-th mean stable.

\item $\mathcal A_m(\Sigma)$ is Schur stable. 
\end{itemize}
\end{corollary}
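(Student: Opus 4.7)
The plan is to deduce this corollary as a direct specialization of Theorem~\ref{theorem:d:stbl} to the particular discrete-time system $\mathcal S\Sigma$. Since Theorem~\ref{theorem:d:stbl} gives a three-way equivalence (exponential $m$-th mean stability, stochastic $m$-th mean stability, and Schur stability of the associated matrix $\mathcal F_m$) for any discrete-time positive semi-Markovian jump linear system satisfying Assumption~\ref{assumption:disc}, the work reduces to two checks: (i) verifying that $\mathcal S\Sigma$ belongs to the class covered by that theorem, and (ii) identifying the resulting matrix $\mathcal F_m$ with $\mathcal A_m(\Sigma)$.

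For (i), I would simply invoke Lemma~\ref{lemma:satisfiesAssm}, which already established precisely that $\mathcal S\Sigma$ is a discrete-time positive semi-Markovian jump linear system satisfying Assumption~\ref{assumption:disc}. Nothing further is required there.

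For (ii), recall that $\mathcal S\Sigma$ has transition matrices $F_k = J_k e^{A_{\sigma_k} h_k}$. Plugging this into the definition \eqref{eq:def:calF} of $\mathcal F_m$, the $(i,j)$-block becomes
\begin{equation*}
[\mathcal F_m]_{ij} = p_{ji}\, E\!\left[(J_k e^{A_{\sigma_k} h_k})^{[m]} \,\middle|\, \sigma_k = j, \sigma_{k+1} = i\right],
\end{equation*}
which is exactly the definition \eqref{eq:def:calA} of $[\mathcal A_m(\Sigma)]_{ij}$. Hence $\mathcal F_m = \mathcal A_m(\Sigma)$, and Theorem~\ref{theorem:d:stbl} applied to $\mathcal S\Sigma$ yields the desired three-way equivalence verbatim.

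There is essentially no obstacle here; the statement is a bookkeeping consequence of the preceding results. The only point deserving attention is the routine matching of notation between the matrix $\mathcal F_m$ (whose block entries condition on $\sigma_k = j$, $\sigma_{k+1} = i$ and multiply by $p_{ji}$) and $\mathcal A_m(\Sigma)$ defined in the continuous-time setting; but since the discretization uses $F_k = J_k e^{A_{\sigma_k} h_k}$, this matching is immediate.
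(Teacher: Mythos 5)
Your proposal is correct and matches the paper's own argument, which likewise derives the corollary immediately from Lemma~\ref{lemma:satisfiesAssm}, Theorem~\ref{theorem:d:stbl}, and the observation that substituting $F_k = J_k e^{A_{\sigma_k} h_k}$ into \eqref{eq:def:calF} yields \eqref{eq:def:calA}. No gaps.
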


The next lemma helps us to relate the stability of $\Sigma$ and
$\mathcal S \Sigma$ and will be used repeatedly.

\begin{lemma}
There exist $0<C_1<C_2<\infty$ such that, for every sample path $x$
of~$\Sigma$ and $k\geq 0$, 
\begin{equation}\label{eq:C1}
C_1 \norm{x(t_k)} \leq \norm{x(t)},\ t_k\leq t< t_{k+1}
\end{equation}
and 
\begin{equation}\label{eq:C2}
\norm{x(t)} \leq C_2\norm{x(t_k)},\ t_k\leq t\leq t_{k+1}.
\end{equation}
\end{lemma}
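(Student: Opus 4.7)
The plan is to exploit the finiteness of the mode set $\{1,\dots,N\}$ together with the uniform upper bounds $h_k \leq T$ and $\norm{J_k} \leq R$ provided by Assumption~\ref{assumption:cont}. On each interval $[t_k, t_{k+1})$ the system \eqref{eq:SMJLS} is linear and autonomous, so the explicit solution formula $x(t) = e^{A_{\sigma_k}(t - t_k)} x(t_k)$ holds, and everything reduces to bounding matrix exponentials of the $N$ matrices $A_1,\dots,A_N$ over the compact time window $[0,T]$.

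For the upper bound \eqref{eq:C2}, I would set
\begin{equation*}
M = \max_{1\leq i\leq N}\ \sup_{0\leq s\leq T} \norm{e^{A_i s}},
\end{equation*}
which is finite since each function $s \mapsto \norm{e^{A_i s}}$ is continuous on the compact interval $[0,T]$ and the outer maximum is over a finite set. On $[t_k, t_{k+1})$ this immediately gives $\norm{x(t)} \leq M \norm{x(t_k)}$. For the right endpoint $t = t_{k+1}$, I would then use the jump relation $x(t_{k+1}) = J_k x(t_{k+1}^-)$ together with $\norm{J_k}\leq R$ to obtain $\norm{x(t_{k+1})} \leq RM \norm{x(t_k)}$. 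Setting $C_2 = \max(M, RM)$ then yields \eqref{eq:C2} uniformly over all $k$ and all sample paths.

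For the lower bound \eqref{eq:C1}, I would run the linear flow \emph{backwards}: for $t_k \leq t < t_{k+1}$ the identity $x(t_k) = e^{-A_{\sigma_k}(t - t_k)} x(t)$ holds because the matrix exponential is always invertible. Setting
\begin{equation*}
M' = \max_{1\leq i\leq N}\ \sup_{0\leq s\leq T} \norm{e^{-A_i s}} < \infty
\end{equation*}
by the same compactness argument, I obtain $\norm{x(t_k)} \leq M' \norm{x(t)}$, so $C_1 = 1/M' > 0$ suffices. Note that it is essential that $t < t_{k+1}$ here, because $J_k$ may have nontrivial kernel and the lower bound can fail at $t = t_{k+1}$; this is exactly why \eqref{eq:C1} is stated on the half-open interval.

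There is no genuine obstacle; the only points requiring a little care are handling the jump endpoint separately from the ODE portion (which is why \eqref{eq:C2} uses the closed interval but \eqref{eq:C1} uses the half-open one) and ensuring $C_1 < C_2$. The latter is automatic: evaluating both inequalities at $t = t_k$ forces $C_1 \leq 1 \leq C_2$, and if necessary one may shrink $C_1$ or enlarge $C_2$ slightly to make the inequality strict.
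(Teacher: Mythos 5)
Your proof is correct and follows essentially the same route as the paper: the explicit solution $x(t)=e^{A_{\sigma_k}(t-t_k)}x(t_k)$ on each interval, inversion of the matrix exponential for the lower bound, and the uniform bounds $h_k\leq T$, $\norm{J_k}\leq R$ over the finite mode set. The only cosmetic difference is that the paper bounds $\norm{e^{\pm A_i h}}$ explicitly by $e^{\norm{A_i}T}$ rather than invoking compactness of $[0,T]$, and your $C_2=\max(M,RM)$ is in fact slightly more careful than the paper's $Re^{\max_i\norm{A_i}T}$ at the jump endpoint when $R<1$.
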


\begin{proof}
First let $t\in [t_k, t_{k+1})$ be arbitrary. Then there exist $h \in
[0, T]$ and $i\in \{1, \dotsc, N\}$ such that $x(t) = e^{A_i
h}x(t_k)$ and therefore $x(t_k) = e^{-A_i h}x(t)$. Hence
\begin{equation*}
\begin{aligned}
\norm{x(t_k)} 
&\leq e^{\norm{A_i} h} \norm{x(t)}
\\
&\leq e^{\max_{1\leq i\leq N} \norm{A_i} T} \norm{x(t)}
\end{aligned}
\end{equation*} 
so that $C_1 :=e^{-\max_{1\leq i\leq N} \norm{A_i} T}$ satisfies
\eqref{eq:C1}. 

Then let $t\in [t_k, t_{k+1}]$. Then there exist $h \in [0, T]$, $i\in
\{1, \dotsc, N\}$, and $J\in \mathbb{R}^{n\times n}$ such that $x(t) =
Je^{A_i h}x(t_k)$. By Assumption~\ref{assumption:cont},
\begin{equation*}
\begin{aligned}
\norm{x(t)} 
&\leq 
\norm{J} e^{\norm{A_i} h} \norm{x(t_k)}
\\
&\leq 
R e^{\max_{1\leq i\leq N} \norm{A_i} T}\norm{x(t_k)}. 
\end{aligned}
\end{equation*}
Thus the inequality \eqref{eq:C2} holds for $C_2 = R e^{\max_{1\leq
i\leq N} \norm{A_i} T}$. 
\end{proof}

Let us prove the sufficiency part of Theorem~\ref{theorem:main}. 

\defproof{the sufficiency part of Theorem~\ref{theorem:main}}
\begin{proof}
Assume that $\mathcal A_m(\Sigma)$ is Schur stable. Then, by Corollary
\ref{corollary:Schur<->DiscSta}, $\mathcal S\Sigma$ is exponentially
$m$-th mean stable. We shall show that $\Sigma$ is exponentially $m$-th
mean stable. To this end, for $t\geq 0$, define the random
variable~$k_t$ by
\begin{equation*}
k_{t}(\omega) = \max\{ k \in \mathbb{N}: t_k(\omega)\leq t \}. 
\end{equation*}
Notice that \eqref{eq:hk<T} shows $t<t_{k_{t} + 1} \leq T(k_{t}
+1)$ and therefore
\begin{equation*}
k_{t} > T^{-1}t -1. 
\end{equation*}

Let $x_0$, $\sigma_0$, and $t\geq 0$ be arbitrary. Let $x$ be the
trajectory of $\Sigma$ and define $x_d(k) = x(t_k)$. Since $t_{k_t} \leq
t<t_{k_t+1}$, the inequality \eqref{eq:C2} gives $\norm{x(t)} \leq C_2
\norm{x(t_{k_t})}= C_2 \norm{x_d(k_{t})}$. Therefore
\begin{equation*}
\begin{aligned}
E[\norm{x(t)}^m]
&\leq
C_2\int_{\Omega} \norm{x_d(k_t)}^m\,dP
\\
&=
C_2\sum_{\ell > T^{-1}t -1} 
\int_{\{\omega: k_t = \ell \}} 
\norm{x_d(\ell)}^m\,dP
\\
&\leq
C_2\sum_{\ell > T^{-1}t -1} 
E[\norm{x_d(\ell)}^m]
\\
&\leq
C_2\sum_{\ell > T^{-1}t -1} 
Ce^{-\beta \ell} \norm{x_0}^m
\\
&\leq
\frac{CC_2e^\beta}{1-e^{-\beta}} e^{-\beta T^{-1}t} \norm{x_0}^m. 
\end{aligned}
\end{equation*}
Thus $\Sigma$ is exponentially $m$-th mean stable.
\end{proof}
\undefproof

\subsection{Proof of Necessity}

Then let us prove necessity for Theorem~\ref{theorem:main}. For the
proof we use a family of continuous-time semi-Markovian jump linear
systems~$\Sigma^{(\tau)}$ ($\tau > 0$) defined by
\begin{equation*}
\begin{aligned}
\Sigma^{(\tau)}: 
\begin{cases}
\dfrac{dx}{dt} = A_{\sigma_k} x(t),\ t_k\leq t<t_{k+1}
\\
x(t_{k+1}) = J^{(\tau)}_k x(t_{k+1}^-),\ k\geq 0
\end{cases}
\end{aligned}
\end{equation*}
where 
\begin{equation*}
J^{(\tau)}_k :=  \chi_{\{h_k\geq \tau\}} J_k
\end{equation*}
for each $k$. Roughly speaking, $\Sigma^{(\tau)}$ makes its state jump
to $0$ whenever it observes a dwell time $h_k$ less than $\tau$ (see
Fig.~\ref{figure:SigmaTau}).
\begin{figure}[tb]
\centering
\includegraphics[width=8cm]{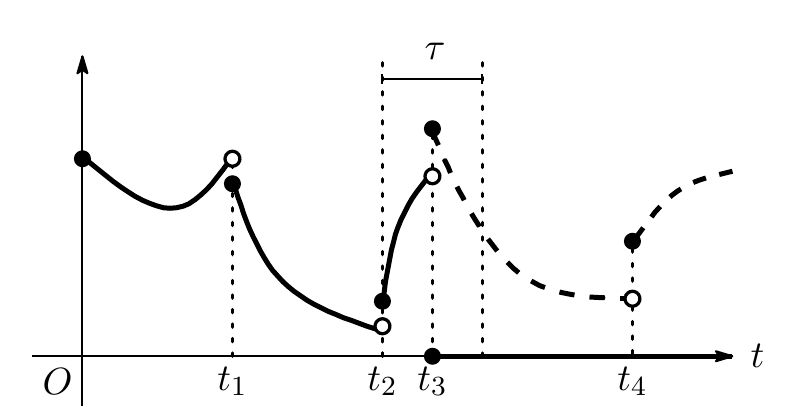}
\caption{Sample paths of $\Sigma$ (dashed) and $\Sigma^{(\tau)}$ (solid)}
\label{figure:SigmaTau}
\end{figure}
 
The next proposition shows that the switched system $\mathcal
S\Sigma^{(\tau)}$ inherits the stability of $\Sigma$. 

\begin{proposition}\label{proposition:tauSta}
{Let $\tau>0$ be arbitrary.} If $\Sigma$ is stochastically $m$-th
mean stable then $\mathcal S \Sigma^{(\tau)}$ is also stochastically
$m$-th mean stable.
\end{proposition}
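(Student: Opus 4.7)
The plan is to compare sample paths of $\Sigma^{(\tau)}$ with those of $\Sigma$ and to bound the discretized norm $\norm{x^{(\tau)}(t_k)}^m$ by an integral of $\norm{x(t)}^m$ over a short trailing window. The $x_0 = x_0^+ - x_0^-$ argument in Lemma~\ref{lemma:x0>0wlog} carries over verbatim to stochastic mean stability, so it suffices to treat $x_0 \geq 0$; Remark~\ref{remark:on:norm} lets me work in the $1$-norm, which is linear on the positive orthant.

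First I couple $\Sigma$ and $\Sigma^{(\tau)}$ on the common probability space, driven by the same $\{(\sigma_k, t_k, J_k)\}_{k=0}^\infty$ and starting from the same $x_0 \geq 0$. Induction on $k$ gives $0 \leq x^{(\tau)}(t) \leq x(t)$ entry-wise for every $t \geq 0$: on $[t_k, t_{k+1})$ both trajectories evolve under the nonnegative matrix $e^{A_{\sigma_k}(t - t_k)}$ (Metzler property), while at $t_{k+1}$ the inequality $J_k^{(\tau)} = \chi_{\{h_k \geq \tau\}} J_k \leq J_k$ preserves the domination. In particular $\norm{x^{(\tau)}(t)}_1 \leq \norm{x(t)}_1$ pathwise.

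For $k \geq 1$, the identity $x^{(\tau)}(t_k) = J^{(\tau)}_{k-1} x^{(\tau)}(t_k^-)$ shows $x^{(\tau)}(t_k) = 0$ whenever $h_{k-1} < \tau$, so only indices with $h_{k-1} \geq \tau$ contribute to the sum $\sum_k \norm{x^{(\tau)}(t_k)}_1^m$. For such a $k$, the relation $x(t_k^-) = e^{A_{\sigma_{k-1}}(t_k - t)} x(t)$ for $t \in [t_{k-1}, t_k)$ combined with $\norm{e^{A_{\sigma_{k-1}} s}} \leq e^{\max_i \norm{A_i} T}$ for $s \in [0, T]$ (Assumption~\ref{assumption:cont}) furnishes a constant $\gamma > 0$, independent of $k$ and of the sample path, such that $\norm{x(t)}_1 \geq \gamma \norm{x(t_k^-)}_1$ for every $t \in [t_k - \tau, t_k]$. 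Combining this with $\norm{x^{(\tau)}(t_k)}_1 \leq \norm{x(t_k)}_1 \leq R \norm{x(t_k^-)}_1$ yields, pathwise,
\begin{equation*}
\norm{x^{(\tau)}(t_k)}_1^m \leq K \int_{t_k - \tau}^{t_k} \norm{x(t)}_1^m\,dt, \qquad K := \frac{R^m}{\tau \gamma^m}.
\end{equation*}

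Finally, $h_{k-1} \geq \tau$ forces $t_k - \tau \geq t_{k-1}$, so the contributing intervals $[t_k - \tau, t_k]$ are pairwise disjoint along each sample path. Summing over $k \geq 1$, adding the $k=0$ term $\norm{x_0}_1^m$, and taking expectations gives
\begin{equation*}
\sum_{k=0}^\infty E\bigl[\norm{x^{(\tau)}(t_k)}_1^m\bigr] \leq \norm{x_0}_1^m + K \int_0^\infty E\bigl[\norm{x(t)}_1^m\bigr]\,dt < \infty,
\end{equation*}
which is exactly stochastic $m$-th mean stability of $\mathcal{S}\Sigma^{(\tau)}$. The main obstacle is establishing the reverse-time lower bound $\norm{x(t)}_1 \geq \gamma \norm{x(t_k^-)}_1$ on the trailing window: it crucially requires the uniform dwell-time bound $h_k \leq T$ from Assumption~\ref{assumption:cont}, and everything else is then bookkeeping together with the disjointness observation.
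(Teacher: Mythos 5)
Your proof is correct and takes essentially the same route as the paper's: both convert the surviving terms of $\sum_{k}\norm{x^{(\tau)}(t_k)}^m$ into a multiple of $\int_0^\infty \norm{x(t)}^m\,dt$ by exploiting that each surviving sample is preceded by a dwell time of length at least $\tau$, over which $\norm{x(\cdot)}$ is comparable to $\norm{x(t_k)}$ thanks to the uniform bounds of Assumption~\ref{assumption:cont}, and then conclude with Tonelli/Fubini. The only differences are bookkeeping — you use trailing windows $[t_k-\tau,t_k]$ together with the pathwise domination $0\le x^{(\tau)}(t)\le x(t)$, whereas the paper uses forward windows $[t_k,t_{k+1}]$ and a case split on whether a short dwell time ever occurs — and your version even sidesteps the paper's special handling of the terminal index.
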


\begin{proof}
Assume that $\Sigma$ is stochastically $m$-th mean stable. Let us show
that $\mathcal S \Sigma^{(\tau)}$ is stochastically $m$-th mean stable.
Let $x$ be a sample path of $\Sigma$ and define $x_d(k) = x(t_k)$. First
assume that there exists $k\geq 0$ such that $h_k < \tau$. Let $k_0$ be
the minimum of such $k$. Then $x_d(k) = 0$ for every $k>k_0$ by the
definition of $J_k^{(\tau)}$.  Therefore, by \eqref{eq:C2},
\begin{equation*}
\begin{aligned}
\sum_{k=0}^{\infty} \norm{x_d(k)}^m
&=
\norm{x(t_{k_0})}^m + \sum_{k=0}^{k_0-1}\norm{x(t_k)}^m
\\
&\leq
C_2^m\norm{x(t_{k_0-1})}^m + \sum_{k=0}^{k_0-1}\norm{x(t_k)}^m
\\
&\leq
(C_2^m + 1)\sum_{k=0}^{k_0-1}\norm{x(t_k)}^m. 
\end{aligned}
\end{equation*}
Since $h_k \geq \tau$ and therefore $1 \leq
\tau^{-1}\int_{t_k}^{t_{k+1}} dt$ for every $k=0, \dotsc, k_0-1$, 
using \eqref{eq:C1} we can show that 
\begin{equation} \label{eq:tausta1}
\begin{aligned}
\sum_{k=0}^{k_0-1}\norm{x(t_k)}^m
&\leq
\tau^{-1}\sum_{k=0}^{k_0-1} \int_{t_k}^{t_{k+1}}\norm{x(t_k)}^m\,dt
\\
&\leq
\tau^{-1}C_1^{-m}\sum_{k=0}^{k_0-1} \int_{t_k}^{t_{k+1}} \norm{x(t)}^m\,dt
\\
&\leq
\tau^{-1}C_1^{-m} \int_{0}^{\infty} \norm{x(t)}^m\,dt. 
\end{aligned}
\end{equation}
Therefore 
\begin{equation}\label{eq:tausta2}
\sum_{k=0}^{\infty} \norm{x_d(k)}^m
\leq
(C_2^m+1)\tau^{-1}C_1^{-m} \int_{0}^{\infty} \norm{x(t)}^m\,dt. 
\end{equation}

Next consider the case when $h_k \geq \tau$ for every $k$. In a similar
way as \eqref{eq:tausta1} we can show that
\begin{equation*}
\begin{aligned}
\sum_{k=0}^\infty\norm{x_d(k)}^m
&\leq
\tau^{-1}\sum_{k=0}^\infty \int_{t_k}^{t_{k+1}} \norm{x_d(k)}^m\,dt
\\
&\leq
\tau^{-1}C_1^{-m} \sum_{k=0}^\infty \int_{t_k}^{t_{k+1}} \norm{x(t)}^m\,dt
\\
&=
\tau^{-1}C_1^{-m} \int_{0}^{\infty} \norm{x(t)}^m\,dt. 
\end{aligned}
\end{equation*} 

This inequality and \eqref{eq:tausta2} imply that there exists $C>0$
such that,  for every sample path $x$ of
{$\Sigma$}, it holds that
\begin{equation*}
\sum_{k=0}^\infty \norm{x_d(k)}^m 
\leq 
C \int_0^\infty \norm{x(t)}^m \,dt. 
\end{equation*}
Since {$\Sigma$} is stochastically $m$-th
mean stable, taking expectations in this inequality and using Fubini's
theorem show that $\mathcal S \Sigma^{(\tau)}$ is stochastically $m$-th
mean stable. 
\end{proof}

The next proposition shows the continuity of the matrix~$\mathcal
A_m(\Sigma^{(\tau)})$ at $\tau = 0$.
 
\begin{proposition}\label{proposition:tauLim}
It holds that 
\begin{equation}\label{eq:tauLim}
\lim_{\tau\to 0} \mathcal A_m(\Sigma^{(\tau)}) 
= 
\mathcal A_m(\Sigma). 
\end{equation}
\end{proposition}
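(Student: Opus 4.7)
The plan is to show that the difference $\mathcal{A}_m(\Sigma) - \mathcal{A}_m(\Sigma^{(\tau)})$ vanishes entrywise as $\tau\to 0$ by a dominated convergence argument, exploiting the almost-sure positivity of the dwell times (Assumption~\ref{assumption:cont}.\ref{item:assm:h_k>0}).

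First I would rewrite the $(i,j)$-block of $\mathcal{A}_m(\Sigma^{(\tau)})$ in a form that isolates the indicator $\chi_{\{h_k\geq\tau\}}$. Since $J_k^{(\tau)} = \chi_{\{h_k\geq\tau\}} J_k$ and the scalar $\chi_{\{h_k\geq\tau\}}$ is either $0$ or $1$, the identity $(cM)^{[m]} = c^m M^{[m]}$ combined with $c^m = c$ for $c\in\{0,1\}$ gives
\begin{equation*}
\bigl(J_k^{(\tau)} e^{A_{\sigma_k} h_k}\bigr)^{[m]}
=
\chi_{\{h_k \geq \tau\}}\bigl(J_k e^{A_{\sigma_k} h_k}\bigr)^{[m]}.
\end{equation*}
Subtracting from the definition of $[\mathcal{A}_m(\Sigma)]_{ij}$ then yields
\begin{equation*}
\bigl[\mathcal{A}_m(\Sigma) - \mathcal{A}_m(\Sigma^{(\tau)})\bigr]_{ij}
=
p_{ji}\, E\bigl[\chi_{\{h_k < \tau\}}(J_k e^{A_{\sigma_k} h_k})^{[m]} \,\big|\, \sigma_k=j,\ \sigma_{k+1}=i\bigr].
\end{equation*}

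Next I would apply the dominated convergence theorem to this conditional expectation. For the pointwise limit, Assumption~\ref{assumption:cont}.\ref{item:assm:h_k>0} asserts $h_k>0$ with probability $1$, so $\chi_{\{h_k<\tau\}}\to 0$ almost surely as $\tau\to 0$. For the dominating function, Assumption~\ref{assumption:cont}.\ref{item:assm:h_k<T} and \ref{item:assm:normJ<R} give $h_k\leq T$ and $\norm{J_k}\leq R$ almost surely, so $\norm{J_k e^{A_{\sigma_k}h_k}} \leq R\,e^{(\max_i \norm{A_i})T}$, whence every entry of $(J_k e^{A_{\sigma_k}h_k})^{[m]}$ is bounded by a deterministic constant depending only on $R$, $T$, $m$, $n$, and $\max_i\norm{A_i}$. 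This bound is integrable (indeed constant) and dominates $\chi_{\{h_k<\tau\}}(J_k e^{A_{\sigma_k}h_k})^{[m]}$ uniformly in $\tau$.

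By dominated convergence, the conditional expectation above converges to $0$ as $\tau\to 0$ for every pair $(i,j)$, which gives \eqref{eq:tauLim} entrywise and hence in any matrix norm. I do not anticipate any substantive obstacle: the argument is essentially bookkeeping plus a routine DCT application, and the only delicate point is the algebraic identity $(\chi X)^{[m]} = \chi X^{[m]}$ for $\chi \in \{0,1\}$, which handles the fact that the operation $A \mapsto A^{[m]}$ is not linear in general but does commute with multiplication by $\{0,1\}$-valued indicators.
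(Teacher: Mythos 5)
Your proof is correct and rests on the same algebraic reduction as the paper's: both arguments use the identity $(J_k^{(\tau)} e^{A_{\sigma_k}h_k})^{[m]} = \chi_{\{h_k\ge\tau\}}\,(J_k e^{A_{\sigma_k}h_k})^{[m]}$ together with the almost-sure pointwise convergence of the indicator furnished by \eqref{eq:hk>0}, and then pass the limit $\tau\to 0$ through the conditional expectation. Where you genuinely differ is in the interchange theorem and hence in which standing hypotheses carry the weight. The paper notes that $\chi_{\{h_k\ge\tau\}}$ \emph{increases} to $1$ as $\tau\to 0$ and that, by positivity (\ref{item:c:assm:>=0}: $J_k\ge 0$ and each $A_i$ Metzler, so $e^{A_ih_k}\ge 0$, whence $(J_ke^{A_{\sigma_k}h_k})^{[m]}$ is entrywise nonnegative), so it can invoke the conditional monotone convergence theorem; no integrable dominating function is needed. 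You instead invoke dominated convergence with a deterministic bound built from conditions \ref{item:assm:h_k<T} and \ref{item:assm:normJ<R} of Assumption~\ref{assumption:cont}, which requires no positivity at all. Both routes are valid here; the trade-off is that the paper's argument exploits the positive structure and would work without the uniform bounds on $h_k$ and $J_k$, while yours would carry over verbatim to non-positive semi-Markovian systems satisfying the boundedness assumptions. Your explicit justification of the key identity via $(cM)^{[m]}=c^mM^{[m]}$ and $c^m=c$ for $c\in\{0,1\}$ is a point the paper leaves implicit, and is worth stating.
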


\begin{proof}  
By \eqref{eq:hk>0} the function~$\chi_{\{h_k(\omega)\geq \tau\}}$
on~$\Omega$ increasingly converges to the constant~$1$ point-wise as
$\tau \to 0$ with probability 1. Hence, since each $A_i$ is Metzler, the
random variable $(J_k^{(\tau)} e^{A_{\sigma_k} h_k})^{[m]} =
\chi_{\{h_k(\omega)\geq \tau\}}(J_k e^{A_{\sigma_k} h_k})^{[m]}$
increasingly converges to $(J_k e^{A_{\sigma_k} h_k})^{[m]}$ as $\tau
\to 0$ point-wise with probability 1. Then the conditional monotone
convergence theorem (see, e.g., \cite{Borkar1995}) immediately shows
\eqref{eq:tauLim}.
\end{proof}

Before showing the proof of the necessity part  we need to introduce
another semi-Markovian jump system given by 
\begin{equation*}
\begin{aligned}
\Sigma_\alpha: 
\begin{cases}
\dfrac{dx_{\alpha}}{dt} 
= 
(A_{\sigma_k} +\alpha I) x_{\alpha}(t),\ t_k\leq t<t_{k+1}
\\
x_{\alpha}(t_{k+1}) = J_k x_{\alpha}(t_{k+1}^-),\ k\geq 0
\end{cases}
\end{aligned}
\end{equation*}
{for $\alpha\in\mathbb{R}$.} Notice that $\Sigma_0$ equals $\Sigma$.
About this system $\Sigma_\alpha$ we will need the next proposition.

\begin{proposition}\label{prop:sigmaalphasta}
If $\Sigma$ is exponentially $m$-th mean stable then there exists
$\alpha>0$ such that $\Sigma_\alpha$ is also exponentially $m$-th mean
stable.
\end{proposition}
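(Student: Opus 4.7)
The plan is to exhibit an explicit pathwise relation between the trajectories of $\Sigma_\alpha$ and $\Sigma$, from which the stability of the latter transfers to the former with only a shift in the decay rate. Concretely, given any sample realization of the driving data $\{\sigma_k, t_k, J_k\}$, I would let $x$ denote the sample path of $\Sigma$ and $x_\alpha$ the sample path of $\Sigma_\alpha$ starting from the same $x_0$ and $\sigma_0$, and I would show
\begin{equation*}
x_\alpha(t) = e^{\alpha t} x(t), \quad t \geq 0.
\end{equation*}

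To verify this, define $y(t) = e^{-\alpha t} x_\alpha(t)$ and check that $y$ satisfies the defining equations of $\Sigma$ with the same initial condition. Between jumps, differentiating gives $dy/dt = -\alpha y + e^{-\alpha t}(A_{\sigma_k} + \alpha I) x_\alpha = A_{\sigma_k} y$, and at jump instants $y(t_{k+1}) = e^{-\alpha t_{k+1}} J_k x_\alpha(t_{k+1}^-) = J_k y(t_{k+1}^-)$, since the scalar factor is continuous across the jump and commutes with $J_k$. Thus $y$ is a solution of $\Sigma$ started at $x_0$ with mode $\sigma_0$, driven by the same $\{\sigma_k, t_k, J_k\}$; by uniqueness $y = x$, which gives the claimed identity.

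With the pathwise identity in hand, the rest is immediate. Taking the $m$-th power of an arbitrary norm and expectations yields
\begin{equation*}
E[\norm{x_\alpha(t)}^m] = e^{m\alpha t}\, E[\norm{x(t)}^m] \leq C e^{-(\beta - m\alpha) t} \norm{x_0}^m,
\end{equation*}
where $C$ and $\beta$ are the constants furnished by the exponential $m$-th mean stability of $\Sigma$. Choosing any $\alpha \in (0, \beta/m)$ makes $\beta - m\alpha > 0$ and hence establishes exponential $m$-th mean stability of $\Sigma_\alpha$.

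There is no real obstacle: the only thing to be mildly careful about is that the jump matrices $J_k$ of $\Sigma_\alpha$ are the same random variables as those of $\Sigma$ (not some $\alpha$-perturbed version), which is what lets the factor $e^{-\alpha t}$ pass through the reset unchanged. Once that is noted, the argument is a one-line change of variable.
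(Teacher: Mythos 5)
Your proposal is correct and follows essentially the same route as the paper: both establish the pathwise identity $x_\alpha(\omega,t)=e^{\alpha t}x(\omega,t)$ (using that $\Sigma$ and $\Sigma_\alpha$ share the same driving processes $\{\sigma_k\}$, $\{t_k\}$, $\{J_k\}$), take expectations, and pick $0<\alpha<\beta/m$. Your verification that $e^{-\alpha t}x_\alpha$ satisfies the defining equations of $\Sigma$, including across the reset instants, is just a more explicit spelling-out of the step the paper states in one line.
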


\begin{proof}
Assume that there exist $C>0$ and $\beta>0$ such that
\eqref{eq:def:expsta} holds. Let $x_0$ and $\sigma_0$ be arbitrary.
Notice that, if $x(\omega ,\cdot)$ and $x_\alpha(\omega, \cdot)$ denote
the sample paths of $\Sigma$ and $\Sigma_\alpha$ for a fixed $\omega \in
\Omega$ with the common initial data $x_0$ and $\sigma_0$, respectively,
then we have $x_\alpha(\omega, t) = e^{\alpha t}x(\omega, t)$ because
$\Sigma$ and $\Sigma_\alpha$ share the same underlying stochastic
processes $\{\sigma_k\}_{k=0}^\infty$, $\{t_k\}_{k=0}^\infty$, and
$\{J_k\}_{k=0}^\infty$. Therefore we have $E[\norm{x_\alpha(t)}^m] =
e^{m\alpha t}E[\norm{x(t)}^m] \leq C e^{(m\alpha - \beta)t}\norm{x_0}^m$
for every $t\geq 0$. Hence $\Sigma_\alpha$ is exponentially $m$-th mean
stable if $0 < \alpha <\beta/m$. This completes the proof.
\end{proof}

{We will also use the next proposition.}

\begin{proposition}\label{proposition:incr}
$\rho(\mathcal A_m(\Sigma)) < \rho(\mathcal A_m(\Sigma_\alpha))$ for
every $\alpha > 0$. 
\end{proposition}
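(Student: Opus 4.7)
The plan is to show that $\mathcal A_m(\Sigma_\alpha)$ dominates $\mathcal A_m(\Sigma)$ entry-wise in a strict enough sense that Corollary~\ref{corollary:compSpecRads} applies. The key observation is that the two systems differ only by the substitution $A_{\sigma_k} \mapsto A_{\sigma_k}+\alpha I$, and since $\alpha I$ commutes with every $A_i$ one has
\begin{equation*}
J_k\,e^{(A_{\sigma_k}+\alpha I)h_k} = e^{\alpha h_k}\, J_k\,e^{A_{\sigma_k} h_k}.
\end{equation*}
So after taking the bracket power, the matrix $\mathcal A_m(\Sigma_\alpha)$ should look like $\mathcal A_m(\Sigma)$ but with an extra scalar factor $e^{m\alpha h_k}$ inside the conditional expectation.

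To make this precise I first need the homogeneity identity $(cM)^{[m]} = c^m M^{[m]}$ for any scalar $c$ and any matrix $M$. This follows from the defining relation \eqref{eq:def:A^[m]}: for all $x\in\mathbb{R}^n$,
\begin{equation*}
(cM)^{[m]} x^{[m]} = ((cM)x)^{[m]} = (cMx)^{[m]} = c^m (Mx)^{[m]} = c^m M^{[m]} x^{[m]},
\end{equation*}
where the third equality is because every entry of $y^{[m]}$ is a monomial of degree exactly $m$ in the entries of $y$. Uniqueness in \eqref{eq:def:A^[m]} then gives $(cM)^{[m]} = c^m M^{[m]}$. Applying this with $c = e^{\alpha h_k}$ yields
\begin{equation*}
[\mathcal A_m(\Sigma_\alpha)]_{ij}
= p_{ji}\, E\!\left[e^{m\alpha h_k}\bigl(J_k e^{A_{\sigma_k} h_k}\bigr)^{[m]}\,\middle|\, \sigma_k=j, \sigma_{k+1}=i\right].
\end{equation*}

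Next, I invoke Assumption~\ref{assumption:cont}\eqref{item:assm:h_k>0}: since $h_k>0$ with probability one and $\alpha>0$, we have $e^{m\alpha h_k} > 1$ almost surely. Because each $A_i$ is Metzler and each $J_k$ is nonnegative, the matrix $(J_k e^{A_{\sigma_k} h_k})^{[m]}$ is nonnegative. Hence, for every scalar entry index $(r,s)$ and every $(i,j)$, if the corresponding entry $[\mathcal A_m(\Sigma)]_{ij,(r,s)}$ of $\mathcal A_m(\Sigma)$ is strictly positive, then the nonnegative random variable $(J_k e^{A_{\sigma_k} h_k})^{[m]}_{rs}$ has positive conditional expectation given $\sigma_k=j,\sigma_{k+1}=i$, and strict monotonicity of conditional expectation under the strict almost-sure inequality $e^{m\alpha h_k}>1$ gives
\begin{equation*}
[\mathcal A_m(\Sigma_\alpha)]_{ij,(r,s)}
> [\mathcal A_m(\Sigma)]_{ij,(r,s)}.
\end{equation*}

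Finally, since both matrices are nonnegative (the former being of the form $\mathcal F_m$ for the system $\Sigma_\alpha$, the latter for $\Sigma$, by Proposition~\ref{proposition:diffeq} together with Lemma~\ref{lemma:satisfiesAssm}), the hypothesis of Corollary~\ref{corollary:compSpecRads} is satisfied, and we conclude $\rho(\mathcal A_m(\Sigma_\alpha)) > \rho(\mathcal A_m(\Sigma))$. The only delicate point is justifying the strict inequality at the entry level — this is exactly where Assumption~\ref{assumption:cont}\eqref{item:assm:h_k>0} enters, since without $h_k > 0$ almost surely one could only conclude $e^{m\alpha h_k} \geq 1$ and get a non-strict comparison.
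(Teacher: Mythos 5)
Your proposal is correct and follows essentially the same route as the paper: factor out $e^{\alpha h_k}$ from the matrix exponential, use the degree-$m$ homogeneity of $(\cdot)^{[m]}$ to obtain the scalar factor $e^{m\alpha h_k}$ inside the conditional expectation, show each positive entry of $\mathcal A_m(\Sigma)$ strictly increases because $h_k>0$ almost surely, and conclude via Corollary~\ref{corollary:compSpecRads}. The only cosmetic difference is that the paper justifies the entrywise strict inequality by exhausting $\Omega$ with the sets $\{h_k\geq\tau\}$ and bounding $e^{m\alpha h_k}-1\geq e^{m\alpha\tau}-1$ on one of them, whereas you appeal directly to strict monotonicity of the integral for a nonnegative integrand positive on a set of positive measure, which amounts to the same thing.
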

\begin{proof}
Let $i, j\in \{1, \dotsc, N \}$ and $r, s\in \{1, \dotsc, n_m \}$ be
arbitrary.  Assume that
\begin{equation*}
[[\mathcal A_m(\Sigma)]_{ij}]_{rs} 
=
\int_\Omega [(J_k e^{A_j h_k})^{[m]}]_{rs} \,dP_{ji}
>
0
\end{equation*}  
where $P_{ji}$ denotes the conditional probability distribution given by
$P_{ji}(\cdot) = P(\cdot \mid \sigma_{k}=i, \sigma_{k+1} = j)$.  By
Lemma~\ref{corollary:compSpecRads} it is sufficient to show that, for
these $i$, $j$, $r$, and~$s$, we have $[[\mathcal
A_m(\Sigma_\alpha)]_{ij}]_{rs} > [[\mathcal A_m(\Sigma)]_{ij}]_{rs}$.
Define $\Omega_\tau := \{\omega \in \Omega: h_k \geq \tau \}$. Since the
family of the measurable sets $\{ \Omega_\tau \}_{\tau > 0}$ covers
$\Omega$ minus a null set by \eqref{eq:hk>0}, there exists $\tau > 0$
such that {$ {\int_{\Omega_{\tau}} [(J_k e^{A_j h_k})^{[m]}]_{rs}
\,dP_{ji}} > 0.$} {Therefore}
\begin{equation*}
\begin{aligned}
{[[\mathcal A_m(\Sigma_\alpha)]_{ij}]_{rs}}
- [[\mathcal A_m(\Sigma)]_{ij}]_{rs}
&=
\int_{\Omega}
[(J_k e^{A_j h_k})^{[m]}]_{rs} 
(e^{m\alpha h_k} -1) \, dP_{ji}
\\
&\geq
\int_{\Omega_\tau}
[(J_k e^{A_j h_k})^{[m]}]_{rs} 
(e^{m\alpha h_k} -1) \, dP_{ji}
\\
&\geq
(e^{m\alpha \tau} -1) 
\int_{\Omega_\tau}
[(J_k e^{A_j h_k})^{[m]}]_{rs} 
\, dP_{ji}. 
\\
&{> 0, }
\end{aligned}
\end{equation*}
{as desired.}
\end{proof}

Now we are at the position to prove the necessity part of Theorem
\ref{theorem:main}. Notice that the two mappings $\Sigma \mapsto
\Sigma^{(\tau)}$ and $\Sigma\mapsto \Sigma_\alpha$ obviously commute so
that we can without confusion denote $(\Sigma^{(\tau)})_\alpha =
(\Sigma_\alpha)^{(\tau)}$ by $\Sigma_\alpha^{(\tau)}$. 

\defproof{the necessity part of Theorem~\ref{theorem:main}}
\begin{proof}
Assume that $\Sigma$ is exponentially $m$-th mean stable. Then, by
Proposition~\ref{prop:sigmaalphasta}, the system $\Sigma_\alpha$ is
exponentially $m$-th mean stable for some $\alpha>0$ as well. Since
$\Sigma_\alpha$ is clearly stochastically $m$-th mean stable, by
Proposition~\ref{proposition:tauSta} the discrete-time system~$\mathcal
S\Sigma_\alpha^{(\tau)}$ is stochastically $m$-th mean stable for every
$\tau>0$. Then Corollary~\ref{corollary:Schur<->DiscSta} shows
$\rho(\mathcal A_m(\Sigma_\alpha^{(\tau)})) < 1$. Hence, by
Proposition~\ref{proposition:incr} and
Proposition~\ref{proposition:tauLim},
\begin{equation*}
\begin{aligned}
\rho(\mathcal A_m(\Sigma))
&<
\rho(\mathcal A_m(\Sigma_\alpha))
\\
&=
\lim_{\tau \to 0}\rho(\mathcal A_m(\Sigma_\alpha^{(\tau)}))
\\
&\leq 1
\end{aligned}
\end{equation*}
because the spectral radius is a continuous function of a matrix. 
\end{proof}
\undefproof

Finally let us remark that, by proving Theorem~\ref{theorem:main}, we
have actually shown the next corollary.
\begin{corollary}
$\Sigma$ is exponentially $m$-th mean stable if and only if $\mathcal{S}
\Sigma$ is exponentially $m$-th mean stable.
\end{corollary}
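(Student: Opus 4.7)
The plan is to observe that the corollary is immediate from combining two equivalences already established in the paper, and in fact both directions were implicitly used (in opposite directions) inside the proof of Theorem~\ref{theorem:main} itself. First I would invoke Theorem~\ref{theorem:main} to replace exponential $m$-th mean stability of $\Sigma$ by the Schur stability of the block matrix $\mathcal{A}_m(\Sigma)$ defined in \eqref{eq:def:calA}. Then I would invoke Corollary~\ref{corollary:Schur<->DiscSta} to replace Schur stability of $\mathcal{A}_m(\Sigma)$ by exponential $m$-th mean stability of $\mathcal{S}\Sigma$; this is legitimate because, when $F_k = J_k e^{A_{\sigma_k} h_k}$ (the transition matrix of $\mathcal{S}\Sigma$ from \eqref{eq:def:Ssigma}), the matrix $\mathcal{F}_m$ appearing in \eqref{eq:def:calF} is literally $\mathcal{A}_m(\Sigma)$. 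Chaining these two biconditionals gives the desired equivalence.

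There is essentially no separate work to do, and no step is a real obstacle. The sufficiency direction in the proof of Theorem~\ref{theorem:main} already established the implication ``$\mathcal{S}\Sigma$ exp.\ $m$-th mean stable $\Rightarrow$ $\Sigma$ exp.\ $m$-th mean stable'' via the bound $E[\norm{x(t)}^m] \le C_2 E[\norm{x_d(k_t)}^m]$ and the uniform upper bound $h_k \le T$ from Assumption~\ref{assumption:cont}. The reverse implication ``$\Sigma$ exp.\ $m$-th mean stable $\Rightarrow$ $\mathcal{S}\Sigma$ exp.\ $m$-th mean stable'' is what the necessity argument in Theorem~\ref{theorem:main} supplies, through the perturbed systems $\Sigma_\alpha^{(\tau)}$, Proposition~\ref{proposition:tauSta}, Proposition~\ref{proposition:tauLim}, and the continuity of the spectral radius.

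Consequently, the proof I would write is a two-line argument: apply Theorem~\ref{theorem:main} followed by Corollary~\ref{corollary:Schur<->DiscSta}, remarking only that the matrix $\mathcal{A}_m(\Sigma)$ from \eqref{eq:def:calA} coincides with the matrix $\mathcal{F}_m$ of Theorem~\ref{theorem:d:stbl} when $\mathcal{S}\Sigma$ is viewed as a discrete-time positive semi-Markovian jump linear system through Lemma~\ref{lemma:satisfiesAssm}. No new estimates, no auxiliary constructions, and no limits are needed at this stage, since all the analytic content has been absorbed into the prior results.
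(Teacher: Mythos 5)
Your proposal is correct and matches the paper's intent exactly: the paper states the corollary follows ``by proving Theorem~\ref{theorem:main},'' which is precisely the chaining of Theorem~\ref{theorem:main} with Corollary~\ref{corollary:Schur<->DiscSta} through the common condition that $\mathcal A_m(\Sigma)$ (equal to $\mathcal F_m$ for $\mathcal S\Sigma$ by Lemma~\ref{lemma:satisfiesAssm}) is Schur stable. No gaps.
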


\section{{Continuous-time Positive} Markovian Jump
Linear Systems} \label{section:mjls}

Though Theorem~\ref{theorem:main} gives the condition for the mean
stability of positive semi-Markovian jump linear systems, the condition
\ref{item:assm:h_k<T} in Assumption~\ref{assumption:cont} excludes
Markovian jump linear systems from the class of systems we can deal
with. The aim of this section is to give a stability characterization of
continuous-time Markovian jump linear systems.

Let us recall the definition of Markovian jump linear systems (see,
e.g., \cite{Feng1992}).  Let $\{r(t)\}_{t\geq 0}$ be a continuous-time
homogeneous Markov process taking values in the set $\{1, \dotsc, N\}$. 
The transition probabilities of the process $r(t)$ is given by, for
every $h>0$,
\begin{equation*}
P(r(t+h) = j \mid r(t) = i) = 
\begin{cases}
q_{ij} h + o(h),\ \ i\neq j,\\
1+q_{ii} h + o(h),\ \ i= j
\end{cases}
\end{equation*}
where $q_{ij}$ ($1\leq i,j\leq N$) are constants such that $q_{ij}\geq
0$ if $i\neq j$ and  $q_{ii} = -\sum_{j:j\neq i} q_{ij}$. The matrix $Q
= [q_{ij}]_{ij}$ is called \change{the transition matrix}{the
infinitesimal generator of the process~$\{r(t)\}_{t\ge 0}$}. Let $A_1$,
$\dotsc$, $A_N$ be $n\times n$ real matrices. Then the differential
equation
\begin{equation}\label{eq:MJLS}
\Sigma: 
\dfrac{dx}{dt} = A_{r(t)}x(t)
\end{equation}
is said to be a \emph{Markovian jump linear system}. {We assume that
$x(0) = x_0 \in \mathbb{R}^n$ and $r(0) = r_0 \in \{1, \dotsc, N\}$ are
constants.} {Notice that the} semi-Markovian jump linear system
\eqref{eq:SMJLS} is a Markovian jump linear system if $J_k = I$ and
\begin{equation*}
P(\sigma_{k+1}=j, h_k \leq t \mid \sigma_k = i) 
=
\begin{cases}
\dfrac{q_{ij}}{\sum_{k\neq i}q_{ik}}(1 - e^{-q_i t}), 
\quad (i\neq j)
\\
0. 
\quad (i=j)
\end{cases}
\end{equation*}
Finally we say that $\Sigma$ is {\it positive} if the matrices $A_1$,
$\dotsc$, $A_N$ are Metzler.

Since the condition~\ref{item:assm:h_k<T} \add{of
Assumption~\ref{assumption:cont}} is not satisfied we cannot use
Theorem~\ref{theorem:main} to check the stability of the Markovian jump
linear system~\eqref{eq:MJLS}. The aim of this section is to give the
next alternative characterization of the mean stability of positive
Markovian jump linear systems.
\begin{theorem}\label{theorem:MJLSstbl}
$\Sigma$ is exponentially $m$-th mean stable if and only if the matrix
\begin{equation} \label{eq:defn:T1}
\mathcal T_m := Q^\top \otimes I_{n_m} + \diag((A_1)_{[m]}, \dotsc, (A_N)_{[m]}) \in \mathbb{R}^{(n_mN \times  n_m N)}
\end{equation}
is Hurwitz stable. 
\end{theorem}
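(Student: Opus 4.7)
The plan is to show that the vector
$$z(t) := E\!\left[\zeta(t) \otimes x(t)^{[m]}\right], \qquad \zeta(t) := e_{r(t)},$$
satisfies the linear ODE $\dot z = \mathcal T_m\, z$, so that the asymptotic behavior of $z$ is governed by the spectral abscissa of $\mathcal T_m$, and then to translate this into a statement about $E[\norm{x(t)}^m]$ using the positivity of $\Sigma$.

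First I would derive the ODE. The joint process $(r(t), x(t))$ is Markov, with infinitesimal generator
$$\mathcal L f(i,x) = (\nabla_x f(i,x))^\top A_i x + \sum_{j=1}^N q_{ij}\, f(j,x)$$
acting on smooth $f$. Applying this coordinate-wise to $f(i,x) = \mathbf 1_{\{i=\ell\}}\, x^{[m]}$ and using the defining identity \eqref{eq:def:A_[m]}, which gives $(\nabla_x x^{[m]})^\top A_i x = (A_i)_{[m]} x^{[m]}$, Dynkin's formula yields
$$\frac{d}{dt}\, z_\ell(t) = (A_\ell)_{[m]}\, z_\ell(t) + \sum_{i=1}^N q_{i\ell}\, z_i(t), \qquad z_\ell(t) := E\!\left[\mathbf 1_{\{r(t)=\ell\}} x(t)^{[m]}\right].$$
Stacking the $N$ blocks and recognizing $\sum_i q_{i\ell} z_i = [(Q^\top \otimes I_{n_m})\, z]_\ell$ gives $\dot z = \mathcal T_m z$, hence $z(t) = e^{\mathcal T_m t} z(0)$.

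Next I would observe that $\mathcal T_m$ is Metzler: the off-diagonal blocks of $Q^\top \otimes I_{n_m}$ equal $q_{ji} I_{n_m} \geq 0$, and $(A_i)_{[m]}$ is Metzler since each $A_i$ is Metzler (check directly on the defining relation \eqref{eq:def:A_[m]}). Hence, by Lemma~\ref{lemma:PFeig} applied to $\mathcal T_m + cI$ for sufficiently large $c > 0$, the spectral abscissa $\eta(\mathcal T_m)$ is itself an eigenvalue of $\mathcal T_m$ with a nonnegative eigenvector.

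For sufficiency, assume $\eta(\mathcal T_m) < 0$. For $x_0 \geq 0$, positivity ensures $x(t) \geq 0$ almost surely, so by \eqref{eq:mnorm} and the linearity of the $1$-norm on the positive orthant,
$$E[\norm{x(t)}_m^m] \leq n\, \oneNorm{E[x(t)^{[m]}]} = n \sum_{\ell=1}^N \oneNorm{z_\ell(t)} = n\, \onenorm{z(t)}.$$
Since $\mathcal T_m$ is Hurwitz, $\onenorm{z(t)} \leq Ce^{-\beta t}\onenorm{z(0)}$, and $\onenorm{z(0)} = \onenorm{e_{r_0}\otimes x_0^{[m]}}$ is bounded by a constant times $\twonorm{x_0}^m$ by norm equivalence and \eqref{eq:norm:pi}. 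This yields \eqref{eq:def:expsta} for all $x_0 \geq 0$; the extension to arbitrary $x_0$ follows verbatim as in Lemma~\ref{lemma:x0>0wlog}.

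For necessity, suppose $\Sigma$ is exponentially $m$-th mean stable but $\eta(\mathcal T_m) \geq 0$, aiming at a contradiction. Let $v \geq 0$, $v\neq 0$, satisfy $\mathcal T_m v = \eta(\mathcal T_m) v$. By Lemma~\ref{lemma:cone}, write $v = \sum_{i=1}^\ell c_i\, e_{r_0^{(i)}} \otimes (x_0^{(i)})^{[m]}$. Linearity of the ODE gives $\sum_i c_i z^{(i)}(t) = e^{\mathcal T_m t} v = e^{\eta(\mathcal T_m) t} v$, where $z^{(i)}$ is the solution with initial data $(x_0^{(i)}, r_0^{(i)})$. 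Taking $\onenorm{\cdot}$ and using $\twonorm{\zeta^{(i)}(t)\otimes x^{(i)}(t)^{[m]}} = \twonorm{x^{(i)}(t)}^m$ from \eqref{eq:norm:pi},
$$e^{\eta(\mathcal T_m) t}\, \onenorm{v} \leq \sum_{i=1}^\ell \abs{c_i}\, E\!\left[\onenorm{\zeta^{(i)}(t)\otimes x^{(i)}(t)^{[m]}}\right] \leq C' \sum_{i=1}^\ell \abs{c_i}\, E[\twonorm{x^{(i)}(t)}^m],$$
which tends to $0$ by exponential mean stability, contradicting $\eta(\mathcal T_m) \geq 0$ and $\onenorm{v} > 0$.

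The main obstacle I expect is the rigorous derivation of the ODE $\dot z = \mathcal T_m z$: while the generator computation is standard, one must justify the exchange of differentiation and expectation, which requires that $E[\norm{x(t)^{[m]}}]$ be locally finite on $[0,\infty)$—this follows from the a~priori bound $\norm{x(t)} \leq e^{(\max_i \norm{A_i}) t} \norm{x_0}$ along every sample path. Once the ODE is in hand, the rest is bookkeeping using the positivity tools already developed in Section~\ref{section:stabDisc}.
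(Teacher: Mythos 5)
Your proposal is correct and follows essentially the same route as the paper: the heart of both arguments is the linear ODE $\frac{d}{dt}E[\delta\otimes x^{[m]}] = \mathcal T_m E[\delta\otimes x^{[m]}]$ (Proposition~\ref{prop:inducedconf}, which the paper derives via the It\^o rule for the jump process $d\delta=\sum_{i\neq j}(E_{ji}-E_{ii})\delta\,d\Pi_{ij}$ rather than via the generator and Dynkin's formula), followed by the identical positivity/1-norm estimate for sufficiency. The only real divergence is in the necessity direction, where the paper dispenses with the Metzler structure and the Perron eigenvector of $\mathcal T_m$ entirely: it applies Jensen's inequality to get $\twonorm{e^{\mathcal T_m t}(\delta_0\otimes x_0^{[m]})}\le E[\twonorm{x(t)}^m]\le Ce^{-\beta t}\twonorm{x_0}^m$ and then invokes Lemma~\ref{lemma:cone} to conclude $e^{\mathcal T_m t}y\to 0$ for all $y$, so your eigenvector-based contradiction is valid but slightly heavier than necessary.
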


\begin{remark}
This theorem extends the well-known characterization~\cite{Feng1992} of
the mean square stability of Markovian jump linear systems to positive
Markovian jump linear systems. 
\end{remark}

The proof follows a similar way as that of Theorem~\ref{theorem:d:stbl}.
\change{First we}{We} introduce the stochastic process~$\{\delta(t)
\}_{t\geq 0}$ that takes its value in the set of standard unit vectors
in~$\mathbb{R}^N$ and is defined by
\begin{equation*}
\delta(t) = e_{r(t)}. 
\end{equation*}
Then $x$ satisfies the differential equation
\begin{equation}\label{eq:dx}
dx = \left(\sum_{i=1}^N \delta_i A_i \right)x\,dt, 
\end{equation}
{which is a generalization of the system studied in \cite{Hanlon2011a}.}
Applying the operator $(\cdot)^{[m]}$ to this equation yields, by
\eqref{eq:def:A_[m]}, 
\begin{equation}\label{eq:dx^[m]}
dx^{[m]} = \left(\sum_{i=1}^N \delta_i (A_i)_{[m]} \right)x^{[m]}\,dt. 
\end{equation}
\add{We can derive a differential equation also for $\delta$ as
follows.} \change{If}{Let} $\Pi_{ij}$ denote\del{s} the Poisson process
of the rate~$q_{ij}$ for every distinct pair~$(i, j)$\add{.} \add{Also
define}\label{pg:defn:Eij}
\begin{equation*}
\add{E_{ij} := e_i e_j^\top.}
\end{equation*}
\change{one can see}{Then it can be seen} that~\cite{Brockett2009}
\begin{equation}\label{eq:ddelta} 
d\delta 
= 
\sum_{i,j\add{: i\neq j}} 
(E_{ji}-E_{ii})\delta \ d\Pi_{ij}.
\end{equation}

\change{The next proposition}{Using the differential
equations~\eqref{eq:dx} and \eqref{eq:ddelta} we can prove the next
proposition, which} is similar to
Proposition~{\ref{proposition:diffeq}}.

\begin{proposition} \label{prop:inducedconf}
It holds that 
\begin{equation}\label{eq:key1}
\frac{d}{dt}E[\delta \otimes x^{[m]}] 
= 
\mathcal T_m E[\delta \otimes x^{[m]}]. 
\end{equation}
\end{proposition}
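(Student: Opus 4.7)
The plan is to write a stochastic differential for the vector $\delta \otimes x^{[m]}$, take expectations, and recognize the resulting matrix as $\mathcal T_m$. Since $x^{[m]}(t)$ has continuous sample paths of finite variation (it is driven by the ODE \eqref{eq:dx^[m]}) and $\delta(t)$ is a pure-jump process, there is no quadratic-covariation term, so the product rule gives
\begin{equation*}
d(\delta \otimes x^{[m]}) = (d\delta) \otimes x^{[m]} + \delta \otimes (dx^{[m]}).
\end{equation*}

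Next I would substitute \eqref{eq:ddelta} and \eqref{eq:dx^[m]} into this identity and simplify using the mixed-product rule $(A\otimes B)(v\otimes w) = Av\otimes Bw$. The jump part becomes
\begin{equation*}
\sum_{i\neq j} \bigl((E_{ji}-E_{ii})\otimes I_{n_m}\bigr)(\delta \otimes x^{[m]})\, d\Pi_{ij}.
\end{equation*}
For the continuous part, since $\delta$ takes values in the set of standard unit vectors we have $\delta_k \delta = \delta_k e_k$ for every $k$, so
\begin{equation*}
\delta \otimes \Bigl(\sum_{k=1}^N \delta_k (A_k)_{[m]}\Bigr)x^{[m]}
=
\sum_{k=1}^N \delta_k\bigl(e_k \otimes (A_k)_{[m]} x^{[m]}\bigr)
=
\diag\bigl((A_1)_{[m]},\dotsc,(A_N)_{[m]}\bigr)(\delta \otimes x^{[m]}),
\end{equation*}
because the $k$-th block of either side equals $\delta_k (A_k)_{[m]} x^{[m]}$.

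The next step is to take expectations. The integrand in the jump term is predictable and bounded on compact intervals (all factors are bounded), so the compensator identity for the Poisson process~$\Pi_{ij}$ of rate~$q_{ij}$ gives $E[X\, d\Pi_{ij}] = q_{ij} E[X]\, dt$ for every such integrand $X$. Combining the two contributions yields
\begin{equation*}
\frac{d}{dt} E[\delta \otimes x^{[m]}]
=
\Bigl(\sum_{i\neq j} q_{ij}(E_{ji}-E_{ii}) \otimes I_{n_m} + \diag\bigl((A_1)_{[m]},\dotsc,(A_N)_{[m]}\bigr)\Bigr) E[\delta \otimes x^{[m]}].
\end{equation*}
Finally, a direct entry-by-entry check, using $q_{ii} = -\sum_{j\neq i} q_{ij}$, shows that $\sum_{i\neq j} q_{ij}(E_{ji}-E_{ii}) = Q^\top$: the off-diagonal entry at position $(a,b)$, $a\neq b$, equals $q_{ba}$, and the diagonal entry at $(a,a)$ equals $-\sum_{j\neq a} q_{aj} = q_{aa}$. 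This identifies the bracketed matrix as $\mathcal T_m$, proving \eqref{eq:key1}.

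I expect the main obstacle to be the careful handling of the stochastic calculus and Kronecker algebra: one must justify that no covariation between $\delta$ and $x^{[m]}$ enters (because $x^{[m]}$ is continuous of finite variation), unfold the identity $\delta \otimes (\sum_k \delta_k (A_k)_{[m]} x^{[m]})$ into the block-diagonal form, and verify that the compensator formula may be applied under expectation. Once these manipulations are in place, computing $\sum_{i\neq j} q_{ij}(E_{ji}-E_{ii}) = Q^\top$ is straightforward bookkeeping.
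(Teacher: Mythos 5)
Your proposal is correct and follows essentially the same route as the paper's proof: apply the jump-process product rule to $\delta\otimes x^{[m]}$, collapse $\delta\otimes\bigl(\sum_k\delta_k(A_k)_{[m]}x^{[m]}\bigr)$ into the block-diagonal term, take expectations using the Poisson compensator, and identify $\sum_{i\neq j}q_{ij}(E_{ji}-E_{ii})=Q^\top$. The only cosmetic difference is that the paper first treats $m=1$ and then substitutes $y=x^{[m]}$, whereas you work with general $m$ directly.
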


\begin{proof} 
First let us consider the case $m=1$. The It\^o rule for jump
processes~\cite{Brockett2009} applied to the variable~$\delta \otimes x$
shows that\add{, by \eqref{eq:dx} and \eqref{eq:ddelta}, }
\begin{equation}\label{eq:proof:diffeq:z_ox_x}
\frac{d}{dt}E[\delta\otimes x] 
= 
E\left[\delta \otimes \sum_{i=1}^N \delta_i A_i x\right]
+ 
E\left[\sum_{i,j\add{: i\neq j}} \left[(\mathchange{\Pi_{ji}}{{E}_{ji}}-\mathchange{\Pi_{ii}}{{E}_{ii}})\delta\add{q_{ij}}\right]\otimes x\right] \stmath{q_{ij}}. 
\end{equation}
Since $\delta_i\delta_j = 0$ if $i\neq j$ and $\delta_i^2 = \delta_i$
for every $i$, from the definition of Kronecker products it follows that
\begin{equation*}
\begin{aligned}
\delta\otimes \sum_{i=1}^N \delta_i A_i x
&=
\begin{bmatrix}
\delta_1 A_1 x
\\
\vdots
\\
\delta_N A_N x
\end{bmatrix}
\\
&=
\diag\left(A_1, \dotsc, A_N\right) (\delta\otimes x). 
\end{aligned}
\end{equation*}
Moreover, since $[(\mathchange{\Pi_{ji}}{{E}_{ji}}-\mathchange{\Pi_{ii}}{{E}_{ii}})
\delta \add{q_{ij}}]) \otimes x =
(\add{q_{ij}}(\mathchange{\Pi_{ji}}{{E}_{ji}}-\mathchange{\Pi_{ii}}{{E}_{ii}})\otimes
I_{n})(\delta \otimes x)$ and
\begin{equation*}
\begin{aligned}
\sum_{i,j: i\neq j}q_{ij} (\mathchange{\Pi_{ji}}{{E}_{ji}}-\mathchange{\Pi_{ii}}{{E}_{ii}})
&=
\sum_{i,j: i\neq j}q_{ij} \mathchange{\Pi_{ji}}{{E}_{ji}} + \sum_{i=1}^N \sum_{j\neq i} (-q_{ij}) \mathchange{\Pi_{ii}}{{E}_{ii}}
\\
&=
\sum_{i,j: i\neq j}q_{ij} \mathchange{\Pi_{ji}}{{E}_{ji}} + \sum_{i=1}^N q_{ii} \mathchange{\Pi_{ii}}{{E}_{ii}}
\\
&=
Q^\top, 
\end{aligned}
\end{equation*}
\change{the right hand side of the second term}{the second term of the right hand side} in \eqref{eq:proof:diffeq:z_ox_x}
equals $(Q^\top \otimes I_{n})E[\delta\otimes x]$. This shows
\eqref{eq:key1} for $m=1$. For a general $m$ notice that for $y :=
x^{[m]}$ we have $dy = \left(\sum_{i=1}^N \delta_i (A_i)_{[m]}
\right)y\,dt$ \add{by \eqref{eq:dx^[m]}}. Then we can proceed in the
same way as above to prove $\frac{d}{dt} E[\delta \otimes y] = \mathcal
T_m E[\delta \otimes y]$, which is \eqref{eq:key1}.
\end{proof}

Let us prove Theorem~\ref{theorem:MJLSstbl}. 
\defproof{Theorem~\ref{theorem:MJLSstbl}}
\begin{proof}
Notice that, by \eqref{eq:key1}, 
\begin{equation}\label{eq:d:diffcsqc}
E[\delta(t) \otimes x(t)^{[m]}] = e^{\mathcal T_m t} (\delta_0 \otimes x_0^{[m]}). 
\end{equation}

First assume that $\mathcal T_m$ is Hurwitz stable. Then there exist
$C>0$ and $\beta>0$ such that, for every $y \in \mathbb{R}^{n_m N}$ and
$t\geq 0$,
\begin{equation}\label{eq:expconv}
\norm{e^{\mathcal T_m t} y}_1 \leq C e^{-\beta t}
\twonorm{y}. 
\end{equation}
Let us show that  $\Sigma$ is exponentially $m$-th mean stable. Let
$x_0$ and $r_0$ be arbitrary. As in Lemma~\ref{lemma:x0>0wlog} we can
assume $x_0\geq 0$ without loss of generality. Then\add{, by the
equivalence of the norms on $\mathbb{R}^{n_m}$, there exists $C_1 > 0$
such that}
\begin{equation*}
\begin{aligned}
E[\twonorm{x(t)}^m]
&=
E[\twonorm{x(t)^{[m]}}]
\\
&\leq
C_1 E[\onenorm{x(t)^{[m]}}]
\\
&=
C_1 E[\onenorm{\delta(t) \otimes x(t)^{[m]}}]
\\
&=
C_1 \onenorm{ E[\delta(t) \otimes x(t)^{[m]}]}
\end{aligned}
\end{equation*}
by the linearity of $\onenorm{\cdot}$ on a positive orthant. This
inequality shows the exponential $m$-th mean stability of~$\Sigma$
because \eqref{eq:d:diffcsqc} and \eqref{eq:expconv} gives
\begin{equation*}
\begin{aligned}
\onenorm{ E[\delta(t) \otimes x(t)^{[m]}] } 
&\leq 
C e^{-\beta t} \twonorm{\delta_0 \otimes x_0^{[m]}} 
\\
&= 
C e^{-\beta t} \twonorm{x_0}^m. 
\end{aligned}
\end{equation*}

On the other hand assume that $\Sigma$ is exponentially $m$-th mean
stable. Let $C>0$ and $\beta>0$ be constants satisfying
\eqref{eq:def:expsta}.  By \eqref{eq:d:diffcsqc},
\begin{equation*}
\begin{aligned}
\twonorm{e^{\mathcal T_m t} (\delta_0 \otimes
x_0^{[m]}) }
&\leq
E[\twonorm{\delta(t) \otimes x(t)^{[m]}}]
\\
&=
E[\twonorm{x(t)}^m]
\\
&\leq
Ce^{-\beta t}\twonorm{x_0}^m
\end{aligned}
\end{equation*}
for every $x_0\in \mathbb{R}^n$ and $\delta_0$. Therefore, by Lemma
\ref{lemma:cone}, $e^{\mathcal T_m t} y$ converges to $0$ for every $y
\in \mathbb{R}^{n_mN}$, which proves that $\mathcal T_m$ is Hurwitz
stable.
\end{proof}
\undefproof

\subsection{{Output Feedback Stabilization}} 

As an illustration of Theorem~\ref{theorem:MJLSstbl} this section
studies the stabilization of positive Markovian jump linear systems in
the first mean, following the setting in \cite{Feng2010} where the
authors study the stabilization of Markovian jump linear systems in the
mean square. Consider the Markovian jump linear system with input and
output defined by
\begin{equation}\label{eq:MJLSio}
\begin{aligned}
\dfrac{dx}{dt} &= A_{r(t)}x(t) + B_{r(t)}u(t)\\
y &= C_{r(t)}x(t), 
\end{aligned}
\end{equation}
where $x(t) \in \mathbb{R}^n$, $u(t) \in \mathbb{R}^{n_u}$, $y(t) \in
\mathbb{R}^{n_y}$, and $A_i$, $B_i$, and $C_i$ ($i=1$, $\dotsc$, $N$)
are real matrices with appropriate dimensions. We study the
stabilization of the system \eqref{eq:MJLSio} via the output feedback
\begin{equation*}
u(t) = K_{r(t)}y(t),
\end{equation*} 
where $K_1$, $\dotsc$, $K_N \in \mathbb{R}^{n_u \times n_y}$. Then
the controlled system is described by
\begin{equation*}{
\Sigma_K:  \dfrac{dx}{dt} = A_{K,r(t)} x(t),\ 
A_{K,i} = A_i+ B_{i}K_{i}C_{i}
}\end{equation*}
which is again a Markovian jump linear system.

We consider the following problem. Let $\mathcal{T}_1(\Sigma_K)$
denote the matrix $\mathcal T_1$ given by \eqref{eq:defn:T1} for the
system $\Sigma_K$.

\begin{problem}\label{prb:MJLSstab}\upshape
Assume that the matrices $A_1$, $\dotsc$, $A_N$ are Metzler. Find
feedback gain matrices $K_1$, $\dotsc$, $K_N$ and a transition matrix
$Q$ such that the controlled system~$\Sigma_K$ is positive and
$\eta(\mathcal T_1(K))$ is minimized.
\end{problem}

This problem aims to stabilize a given positive Markovian jump linear
system in the first mean while keeping its positivity. Notice that not
only the gain matrices but also the transition matrix can be designed.
We can reduce the problem to an optimization without constraints as
follows. For $A \in \mathbb{R}^{n\times n}$ define its distance from the
set of $n\times n$ Metzler matrices $\mathbb{M}_{n}$ by $d(A,
\mathbb{M}_n) := \sum_{i=1}^n \sum_{j\neq i} \max(-A_{ij}, 0)$. Clearly
$A$ is Metzler if and only if $d(A, \mathbb{M}_n) = 0$. Then define
\begin{equation}\label{eq:functional}
f(K_1, \dotsc, K_N, Q) 
:= 
\eta(\mathcal{T}_1(\Sigma_K)) + 
\Gamma \left(\sum_{i=1}^N d(A_{K,i}, \mathbb{M}_n)
+
\sum_{i\neq j} \max(-q_{ij}, 0)\right)
\end{equation}
where $\Gamma >0$ is a constant. When $\Gamma$ is sufficiently large,
Problem~\ref{prb:MJLSstab} is almost equivalent to the minimization of
$f$ with respect to the free parameters~$K_1$, $\dotsc$, $K_N \in
\mathbb{R}^{n_u \times n_y}$ and $q_{ij}\in \mathbb{R}$ ($i\neq j$)
because the second term of $f$ forces $\Sigma_K$ to be positive and $Q$
be an admissible transition matrix. One can solve this minimization
problem using the gradient sampling algorithm proposed
in~\cite{Burke2005}, which has been applied to the design of low-order
and fixed-order controllers successfully~\cite{Burke2005,Burke2006}.

\begin{example}\upshape
Consider the Markovian jump linear system \eqref{eq:MJLSio} with
the two subsystems given by the triples
\begin{equation*}{
\begin{gathered}
(A_1, B_1, C_1) = \left(
\begin{bmatrix}0&0.2\\0.9&0.9\end{bmatrix}, 
\begin{bmatrix}0.6\\0.3\end{bmatrix},
\begin{bmatrix}0.3&0.1\end{bmatrix}
\right), 
\\
(A_2, B_2, C_2) = \left(
\begin{bmatrix}0.1&0.4\\0.6&-0.3\end{bmatrix}, 
\begin{bmatrix}0.2\\0.8\end{bmatrix}. 
\begin{bmatrix}-0.8&1\end{bmatrix}
\right). 
\end{gathered}
}\end{equation*}
We set $x_0 = [1\quad 1]^\top$ and $r_0 = 1$. We minimize the
function~\eqref{eq:functional} using a MATLAB
implementation~\cite{Burke} of the gradient sampling
algorithm~\cite{Burke2005} with $\Gamma = 10^5$ and obtain
\begin{equation*}{
K_1 = {-3.3333},\ 
K_2={-2.0000},\ 
Q = \begin{bmatrix}-2068.3&2068.3\\3123.1&-3123.1\end{bmatrix}
}\end{equation*}
that make $\Sigma_K$ positive and achieve $\eta(\mathcal
T_1(\Sigma_K)) = -0.1936$. A sample path of $\Sigma_K$is shown in
Fig.~\ref{figure:faststabilization}.
\begin{figure}[tb]
\centering
\includegraphics[width=9cm]{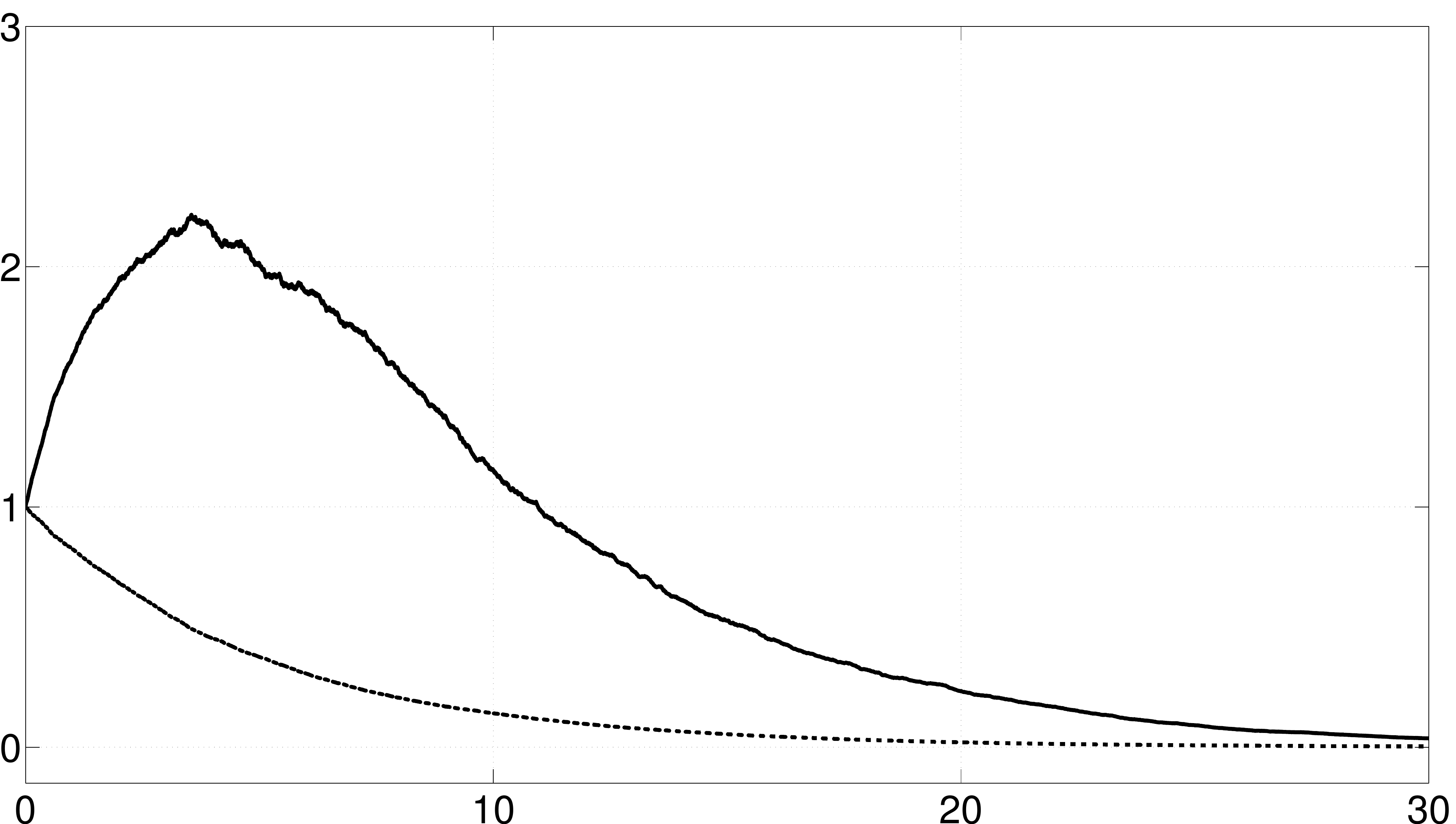}
\caption{{A sample path of the stabilized system. Dotted: $x_1(t)$. Solid: $x_2(t)$.}}
\label{figure:faststabilization}
\end{figure}

Though the above obtained parameters achieve stabilization in the first
mean, the transition matrix yields rather fast switchings that may not
be desirable in practice. To achieve stabilization with slower switching
rates let us consider the next modified objective function
\begin{equation*}
g(K_1, \dotsc, K_N, Q) 
= 
f(K_1, \dotsc, K_N, Q) 
+ 
\Gamma \sum_{i\neq j} \max({q_{ij}} - \bar q, 0)
\end{equation*}
where $f$ is defined in \eqref{eq:functional} and $\bar q > 0$ is a
constant. The second term of~$g$ aims to confine each off-diagonal entry
of $Q$ less than $\bar q$. Minimizing this function with $\Gamma = 10^5$
and $\bar q = 2$ gives
\begin{equation*}
K_1 = -3.3308,\ 
K_2=-1.9998,\ 
Q = \begin{bmatrix}-1.9997&1.9997\\1.9817&-1.9817\end{bmatrix}. 
\end{equation*}
With these parameters $\Sigma_K$ is still positive and first mean stable
as $\eta(\mathcal T_1(\Sigma_K)) =  -0.02251$. A sample path of the
stabilized system is shown in Fig.~\ref{figure:stabilization}.
\begin{figure}[tb]
\centering 
\includegraphics[width=9cm]{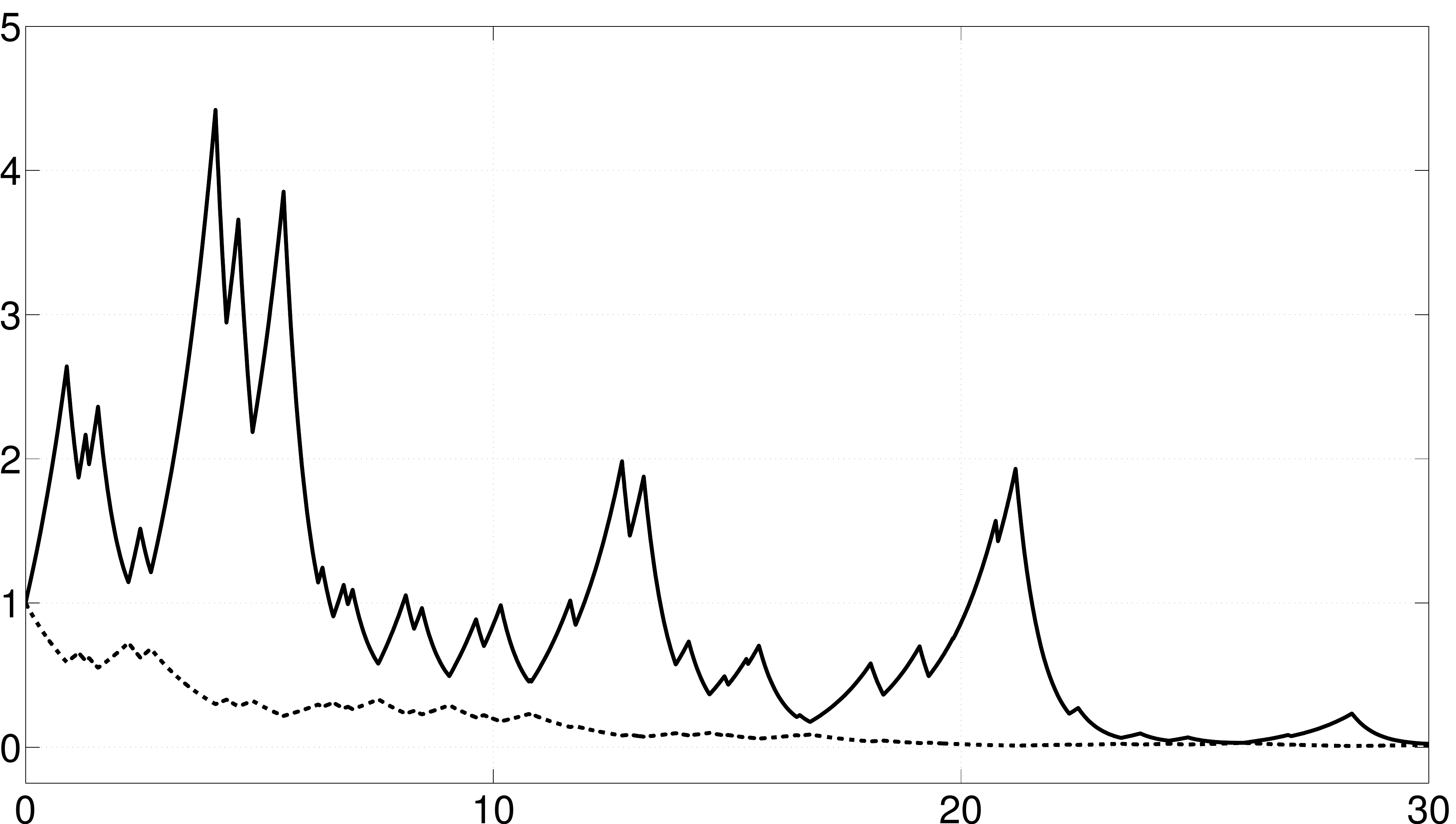}
\caption{{A sample path of the stabilized system with slower switching
rates. Dotted: $x_1(t)$. Solid: $x_2(t)$.}}
\label{figure:stabilization}
\end{figure}
\end{example}

\section{Conclusion} 

This paper studied the mean stability of positive semi-Markovian jump
linear systems. We showed that the mean stability is determined by the
spectral  radius of an associated matrix that is easy to compute. For
deriving the  condition we used a discretization of a semi-Markovian
jump linear system that preserves stability. Also we have given a
characterization for the exponential mean stability of continuous-time
positive Markovian jump linear systems. We illustrated the obtained
results with numerical examples.


\appendix

\section{Proof of Lemma~\ref{lemma:indp}}
\label{appendix:proof:lemma:indp}

Let $B_1 \subset \mathbb{R}^{n\times n}$ and $B_2 \subset
\mathbb{R}^{n}$ be arbitrary Borel sets. We need to show
\begin{equation}\label{eq:prf:tech}
P'\left(\zeta(k+1)_j F_k \in B_1, x_d(k)\in B_2\right) 
=
P'\left(\zeta(k+1)_j F_k \in B_1\right) 
P'\left(x_d(k) \in B_2\right). 
\end{equation}
The left hand side of this equation can be computed as
\begin{equation}\label{eq:prf:tech:1}
\begin{aligned}
&P'\left(\zeta(k+1)_j F_k \in B_1, x_d(k)\in B_2\right) 
\\
=&
\frac{1}{P(\Omega')}P\left(\zeta(k+1)_j F_k \in B_1, x_d(k)\in B_2, \sigma(k) = i\right)
\\
=&
P\left(\zeta(k+1)_j F_k \in B_1 \mid x_d(k)\in B_2, \sigma(k) = i\right)
P\left(x_d(k) \in B_2 \mid \sigma(k) = i\right)
\\
=&
P\left(\zeta(k+1)_j F_k \in B_1 \mid \sigma(k) = i\right)
P(x_d(k) \in B_2 \mid \sigma(k) = i)
\end{aligned}
\end{equation}
where in the last equation we used the condition~\ref{item:d:assm:renew}
of Assumption~\ref{definition:d}.  Then it is easy to show that
\begin{equation} 
\begin{aligned}
P(\zeta(k+1)_j F_k \in B_1 \mid \sigma(k) = i)
&=
\frac{1}{P(\Omega')}P(\zeta(k+1)_j F_k \in B_1 ,\sigma(k) = i)
\\
&=
P'(\zeta(k+1)_j F_k\in B_1). 
\end{aligned}
\end{equation}
In a similar way we can see that 
\begin{equation}\label{eq:prf:tech:3}
P(x_d(k) \in B_2 \mid \sigma(k) = i)
=
P'(x_d(k)\in B_2). 
\end{equation}
The equations \eqref{eq:prf:tech:1} to \eqref{eq:prf:tech:3} prove
\eqref{eq:prf:tech}. \endproof

\end{document}